\documentclass[11pt]{amsart}

\usepackage{amsmath,amssymb,amsfonts,bm,latexsym,yfonts,cite,mathtools}
\usepackage{geometry}

\newtheorem{definition}{Definition}[section]
\newtheorem{proposition}[definition]{Proposition}
\newtheorem{theorem}[definition]{Theorem}
\newtheorem{lemma}[definition]{Lemma}

\newtheorem{corollary}[definition]{Corollary}

\numberwithin{equation}{section}

\newcommand{\C}{\mathbb{C}}

\newcommand{\N}{\mathbb{N}}
\newcommand{\R}{\mathbb{R}}
\renewcommand{\S}{\mathbb{S}}

\newcommand{\cG}{\mathcal{G}}
\newcommand{\cM}{\mathcal{M}}

\newcommand{\cS}{\mathcal{S}}

\title[Estimates for the $k$-plane transform]{Mapping estimates for the $k$-plane transform in Sobolev, Besov, and Triebel--Lizorkin Spaces
}

\author{Fatma Terzioglu}
\address{North Carolina State University, Department of Mathematics, Raleigh, NC 27695, USA.}
\email{fterzioglu@ncsu.edu}

\subjclass[2020]{44A12, 42B35, 53C65, 92C55}
\keywords{k-plane transform, Radon transform, X-ray transform, mapping estimates,
Sobolev spaces, Besov spaces, Triebel--Lizorkin spaces}

\begin{document}

\begin{abstract}
We study mapping properties of the $k$-plane transform in Sobolev, Besov, and Triebel--Lizorkin spaces. For $1\le k\le d-1$, the $k$-plane transform integrates a function over $k$-dimensional affine planes in $\R^d$, yielding a function on the affine Grassmannian $\cG_{k,d}$. First, we establish Sobolev stability estimates for compactly supported functions, extending classical results of Natterer for the X-ray ($k=1$) and Radon ($k=d-1$) transforms to the general $k$-plane transform. Second, we extend isometry identities for the Radon and X-ray transforms, due to Reshetnyak, Sharafutdinov, and Kindermann--Hubmer, to the $k$-plane transform. Finally, we prove boundedness of the $k$-plane transform in Besov and Triebel--Lizorkin spaces. 
\end{abstract}

\maketitle

 \tableofcontents

\section{Introduction}
In this paper we study mapping properties of the $k$-plane transform in Sobolev, Besov, and Triebel--Lizorkin spaces. For $d\ge2$ and $1\le k\le d-1$, the $k$-plane transform integrates a function over $k$--dimensional affine planes in $\R^d$. The collection of all such planes is the affine Grassmannian $\cG_{k,d}$. 

The case $k=1$ corresponds to the X-ray (or John's) transform, which integrates functions over lines in $\R^d$ \cite{John1994}, while the case $k=d-1$ yields the classical Radon transform, which integrates over affine hyperplanes \cite{Natterer,Helgason}. These transforms arise in several areas of mathematics (see, e.g., \cite{Markoe2006,Gardner2006,Rubin2015}) and play a central role in imaging techniques such as Computed Tomography and Magnetic Resonance Imaging, where their inversion enables reconstruction of images from projection data \cite{Natterer,NattWubb}.

The analytic properties of the $k$-plane transform have been widely studied. Inversion formulas in various function spaces were developed in \cite{Keinert1989,Rubin1998,Rubin2004,Rubin2012,Parhi2024}. Range characterizations were obtained in \cite{Gonzalez1990,Gonzalez1991,Kurusa1991,Helgason}, while microlocal properties were analyzed in \cite{Chihara2022}. 

Our focus is on \emph{mapping estimates} for the $k$-plane transform $P$, that is, bounds of the form
\[
\|Pf\|_{X(\cG_{k,d})} \le C\,\|f\|_{Y(\R^d)},
\]
where $X(\cG_{k,d})$ and $Y(\R^d)$ are function spaces on the affine Grassmannian $\cG_{k,d}$ and on $\R^d$, respectively. Such inequalities quantify how the regularity or integrability of a function is affected by integration over $k$-dimensional planes. Mapping properties of this type are well studied in the case of Lebesgue spaces \cite{Christ1984,Drury1983,Drury1984,Drury1989,Drouot2014,Sato2022,SmithSolmon1975,Solmon1979,Strichartz1981,Calderon1983,OberlinStein1982,Duoandikoetxea2002,Duoandikoetxea2001,Rubin2014,Quinto1982,Kumar2010,Flock2025}.

Sobolev space estimates, which additionally measure regularity, have been studied only for the X-ray and Radon transforms. Two-sided estimates for compactly supported functions in Sobolev spaces were obtained by Natterer \cite[Theorem~5.1]{Natterer}. In the case of the Radon transform ($k=d-1$), Reshetnyak \cite{GGV1966} established an isometry identity in the $L^2$ setting. Sharafutdinov later extended this result to Sobolev and weighted Sobolev spaces \cite{Sharafutdinov2021radon,Sharafutdinov2017tensor}, and also treated the X-ray transform under similar assumptions \cite{Sharafutdinov2021x-ray}. Recently, Kindermann and Hubmer \cite{Kindermann2025} extended the Radon transform isometry to weighted fractional Sobolev spaces and used it to derive nonlinear filtered backprojection algorithms.

Beyond their analytic significance, these estimates also have geometric consequences. When applied to indicator functions of convex bodies they yield inequalities controlling the volumes of their sections in higher codimension (see e.g., \cite{Gardner2006,Rubin2019}). In terms of applications, such bounds are closely related to stability properties of inversion algorithms. For further perspectives on the affine Grassmannian in mathematics and applied statistics we refer to \cite{Lim2021}.

The goal of this paper is to extend these developments in several directions. Our main contributions are the following:
\begin{itemize}
\item In Theorem~\ref{thm:sobolev-estimate}, we establish two-sided Sobolev estimates for the $k$-plane transform for compactly supported functions, extending Natterer's results for the X-ray and Radon transforms to all $1 \le k \le d-1$.
\item In Theorem~\ref{thm:P-isometry}, we extend the isometry identities known for the Radon and X-ray transforms to the $k$-plane transform.
\item In Theorem~\ref{thm:BL-TL-estimate}, we prove boundedness of the $k$-plane transform in Besov and Triebel--Lizorkin spaces.
\end{itemize}

Besov and Triebel–Lizorkin spaces provide a finer description of regularity by decomposing functions into components localized in different frequency bands via Littlewood–Paley decompositions. They cover many classical function spaces such as Sobolev spaces, fractional Sobolev (Bessel--potential) spaces, Hölder--Zygmund spaces, local Hardy spaces, and the space of functions of bounded mean oscillation \cite{Triebel1983,Triebel1992,Grafakos2014classical,Grafakos2014modern,Hytonen2023}. Besides their theoretical interest, the estimates obtained here may also contribute to the development of new numerical methods for inversion of the $k$-plane transform.

The paper is organized as follows. In Section~\ref{sec:preliminaries}, we introduce the $k$-plane transform, review its basic properties, and establish two additional properties (Propositions~\ref{prop:intertwine-kplane} and \ref{prop:vector-valued inequality}) needed for the later results. Section~\ref{sec:Lp-estimates} summarizes known Lebesgue space estimates for the $k$-plane transform, which will be instrumental in our subsequent analysis in Besov and Triebel–Lizorkin spaces. In Section~\ref{sec:Sobolev-estimates}, we develop Sobolev space estimates, including weighted Sobolev estimates. Section~\ref{sec:BL-TL-estimates} introduces Besov and Triebel–Lizorkin spaces on the affine Grassmannian and establishes mapping estimates for the $k$-plane transform in these spaces. Finally, Section~\ref{sec:conclusion} contains concluding remarks.

\section{$k$-plane Transform and its Properties}\label{sec:preliminaries}

Let $d\ge2$ and $1\le k\le d-1$. 
We denote by $G_{k,d}$ the Grassmannian of all $k$--dimensional subspaces of $\mathbb R^d$. 
For $\alpha\in G_{k,d}$, let $\alpha^\perp$ denote its orthogonal complement, which has dimension $d-k$.

The affine Grassmannian $\mathcal G_{k,d}$ is the space of all affine $k$--planes in $\mathbb R^d$, parametrized by
\[
\mathcal G_{k,d}
=
\{(\alpha,y): \alpha\in G_{k,d},\ y\in\alpha^\perp\},
\]
where the pair $(\alpha,y)$ represents the affine plane
\[
\alpha+y=\{x+y : x\in\alpha\}.
\]

\medskip

We denote by $\mathcal S(\mathbb R^d)$ the Schwartz space on $\mathbb R^d$ and by $\mathcal S'(\mathbb R^d)$ its dual.

The Schwartz space $\mathcal S(\mathcal G_{k,d})$ consists of all functions 
$u\in C^\infty(\mathcal G_{k,d})$ such that for every differential operator 
$D_\alpha$ on $G_{k,d}$, every constant--coefficient differential operator 
$D_y$ in the fiber variable $y$, and every $N\ge0$,
\[
\sup_{(\alpha,y)\in\mathcal G_{k,d}}
(1+|y|)^N
\bigl|
(D_\alpha D_y u)(\alpha,y)
\bigr|
<\infty.
\]
Equivalently, one may take $D_y=\partial_y^\beta$ for multi-indices $\beta$.
In local coordinates given by a smooth orthonormal frame
$\gamma(\alpha):\mathbb R^{d-k}\to\alpha^\perp$ (so $y=\gamma(\alpha)x$),
this is equivalent to uniform Schwartz estimates in $x\in\mathbb R^{d-k}$,
uniformly in $\alpha$; see \cite{Gonzalez1991}.

The space $\mathcal S'(\mathcal G_{k,d})$ of tempered distributions on
$\mathcal G_{k,d}$ is the continuous dual of $\mathcal S(\mathcal G_{k,d})$:
\[
\mathcal S'(\mathcal G_{k,d})
=
\{\,T:\mathcal S(\mathcal G_{k,d})\to\mathbb C
\text{ linear and continuous}\,\},
\]
and the action is denoted by $\langle T,u\rangle$.

\medskip

Throughout the paper we use the unitary Fourier transform. 
For $f\in\mathcal S(\mathbb R^d)$,
\[
\hat f(\xi)
=
(2\pi)^{-d/2}\int_{\mathbb R^d} e^{-ix\cdot\xi} f(x)\,dx,
\qquad
f(x)
=
(2\pi)^{-d/2}\int_{\mathbb R^d} e^{ix\cdot\xi}\hat f(\xi)\,d\xi.
\]
With this normalization, Plancherel's theorem states that
\begin{equation}\label{Plancherel}
\int_{\mathbb R^d} f(x)\,\overline{g(x)}\,dx
=
\int_{\mathbb R^d} \hat f(\xi)\,\overline{\hat g(\xi)}\,d\xi, \qquad f,g\in\mathcal S(\mathbb R^d).
\end{equation}

For $u\in\mathcal S(\mathcal G_{k,d})$ and fixed $\alpha\in G_{k,d}$,
we define the fiberwise Fourier transform in the variable
$y\in\alpha^\perp$ by
\[
\hat u(\alpha,\eta)
=
(2\pi)^{-(d-k)/2}
\int_{\alpha^\perp} e^{-iy\cdot\eta} u(\alpha,y)\,dy,
\qquad
\eta\in\alpha^\perp.
\]
This definition is independent of the choice of orthonormal coordinates on
$\alpha^\perp$ and extends uniquely to a continuous automorphism of
$\mathcal S'(\mathcal G_{k,d})$ acting fiberwise via
\[
\langle \widehat T,u\rangle=\langle T,\hat u\rangle.
\]

\medskip

With these preparations, we introduce the $k$--plane transform.

\begin{definition}[$k$--plane transform]
For $f\in\mathcal S(\mathbb R^d)$, the $k$--plane transform $P$ is defined by
\[
(Pf)(\alpha,y)
=
\int_{x\in\alpha} f(x+y)\,dx,
\qquad
\alpha\in G_{k,d},\ y\in\alpha^\perp,
\]
where $dx$ denotes Lebesgue measure on $\alpha$.
\end{definition}

The mapping $P:\mathcal S(\mathbb R^d)\to\mathcal S(\mathcal G_{k,d})$ is continuous \cite{Gonzalez1991}.

\medskip

The $k$--plane transform admits a particularly simple description in the
Fourier domain. The following Fourier--slice identity is classical and
fundamental for the analysis of $P$; we include the proof for completeness.

\begin{theorem}[Fourier--slice theorem]\label{t:FST}
For every $f\in\mathcal S(\mathbb R^d)$,
\[
\widehat{Pf}(\alpha,\eta)
=
(2\pi)^{k/2}\hat f(\eta),
\qquad
\alpha\in G_{k,d},\ \eta\in\alpha^\perp,
\]
where $\widehat{Pf}(\alpha,\eta)$ denotes the fiberwise Fourier transform
of $Pf(\alpha,\cdot)$ in $y$, and $\hat f$ is the Fourier transform of
$f$ restricted to $\alpha^\perp$.
\end{theorem}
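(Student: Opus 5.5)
The plan is a direct computation combining the fiberwise Fourier transform with Fubini's theorem and the orthogonal decomposition $\R^d=\alpha\oplus\alpha^\perp$. Fix $\alpha\in G_{k,d}$ and $\eta\in\alpha^\perp$. By definition,
\[
\widehat{Pf}(\alpha,\eta)=(2\pi)^{-(d-k)/2}\int_{\alpha^\perp} e^{-iy\cdot\eta}\int_{\alpha} f(x+y)\,dx\,dy .
\]
Since $f\in\cS(\R^d)$, the function $(x,y)\mapsto f(x+y)$ is absolutely integrable on $\alpha\times\alpha^\perp$. Indeed, the map $(x,y)\mapsto x+y$ is a linear isometry from $\alpha\times\alpha^\perp$ onto $\R^d$, so the product of Lebesgue measures on $\alpha$ and $\alpha^\perp$ is Lebesgue measure on $\R^d$. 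Fubini's theorem therefore lets us treat the iterated integral as a single integral over $\R^d$ in the variable $z=x+y$.

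The key observation is that the phase depends on $z$ only through its component in $\alpha^\perp$. For $x\in\alpha$ and $\eta\in\alpha^\perp$ we have $x\cdot\eta=0$, so $y\cdot\eta=(x+y)\cdot\eta=z\cdot\eta$. Hence
\[
\widehat{Pf}(\alpha,\eta)=(2\pi)^{-(d-k)/2}\int_{\R^d} e^{-iz\cdot\eta} f(z)\,dz=(2\pi)^{-(d-k)/2}(2\pi)^{d/2}\hat f(\eta)=(2\pi)^{k/2}\hat f(\eta).
\]
This is the claimed identity, where $\eta$ is regarded as a point of $\R^d$ lying in $\alpha^\perp$.

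There is no real obstacle; the only points needing care are the following.
\begin{itemize}
\item Measure-preservation of the isometric identification $\alpha\times\alpha^\perp\cong\R^d$. This follows by choosing an orthonormal basis of $\R^d$ adapted to $\alpha$ and $\alpha^\perp$, and it also justifies that the result does not depend on the choice of coordinates.
\item Bookkeeping of the normalizing constants $(2\pi)^{-(d-k)/2}$ and $(2\pi)^{-d/2}$, which combine to give the factor $(2\pi)^{k/2}$.
\end{itemize}
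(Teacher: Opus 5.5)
Your proposal is correct and follows the paper's proof: Fubini over $\R^d=\alpha\oplus\alpha^\perp$, the observation that $x\cdot\eta=0$ for $x\in\alpha$, and the constant bookkeeping $(2\pi)^{-(d-k)/2}(2\pi)^{d/2}=(2\pi)^{k/2}$. Your remarks on absolute integrability and on the measure-preserving identification $\alpha\times\alpha^\perp\cong\R^d$ make explicit what the paper leaves implicit.
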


\begin{proof}
Fix $\alpha\in G_{k,d}$ and $\eta\in\alpha^\perp$.
Using Fubini's theorem and the orthogonal decomposition
$\mathbb R^d=\alpha\oplus\alpha^\perp$, we compute
\begin{align*}
\widehat{Pf}(\alpha,\eta)
&=(2\pi)^{-(d-k)/2}
\int_{\alpha^\perp} e^{-iy\cdot\eta}
\Bigl(\int_\alpha f(x+y)\,dx\Bigr)\,dy \\
&=(2\pi)^{-(d-k)/2}
\int_{\mathbb R^d} e^{-iz\cdot\eta} f(z)\,dz
= (2\pi)^{k/2}\hat f(\eta),
\end{align*}
since $x\cdot\eta=0$ for $x\in\alpha$.
\end{proof}

\medskip

The next property follows directly from Theorem~\ref{t:FST} and will be
used later in the study of mapping properties in Section~\ref{sec:BL-TL-estimates}.

\begin{proposition}[Intertwining property]\label{prop:intertwine-kplane}
Let $\varphi:\mathbb R^d\to\mathbb C$ be bounded and Borel measurable.
Define the Fourier multiplier
\[
(Mf)\widehat{\ }(\xi)=\varphi(\xi)\hat f(\xi),
\]
and define the fiberwise multiplier $\mathcal M$ on $\mathcal S(\mathcal G_{k,d})$ by
\[
(\mathcal M u)\widehat{\ }(\alpha,\eta)
=
\varphi(\eta)\hat u(\alpha,\eta).
\]
Then for every $f\in\mathcal S(\mathbb R^d)$,
\[
\mathcal M(Pf)=P(Mf).
\]
\end{proposition}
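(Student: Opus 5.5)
The plan is to compare both sides fiberwise in the Fourier variable, using Theorem~\ref{t:FST}, and then invoke injectivity of the fiberwise Fourier transform. Fix $f\in\mathcal S(\mathbb R^d)$ and $\alpha\in G_{k,d}$. Since $\varphi$ is only bounded and measurable, $Mf$ need not be Schwartz. So we first have to say what $P(Mf)$ means. We note that $\widehat{Mf}=\varphi\hat f\in L^1\cap L^2(\mathbb R^d)$, so $Mf\in L^2(\mathbb R^d)\cap C_0(\mathbb R^d)$. We interpret $P(Mf)(\alpha,\cdot)$ as the $L^2(\alpha^\perp)$ function (for a.e. $\alpha$, or as a tempered distribution on $\mathcal G_{k,d}$) characterized by the Fourier--slice relation. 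Likewise, $\mathcal M(Pf)$ is defined by its fiberwise Fourier transform $\varphi(\eta)\widehat{Pf}(\alpha,\eta)$, which is bounded times Schwartz, hence in $L^1\cap L^2(\alpha^\perp)$.

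The key computation is then immediate. By Theorem~\ref{t:FST}, for $\eta\in\alpha^\perp$,
\[
(\mathcal M Pf)\widehat{\ }(\alpha,\eta)=\varphi(\eta)\,\widehat{Pf}(\alpha,\eta)=(2\pi)^{k/2}\varphi(\eta)\hat f(\eta)=(2\pi)^{k/2}\widehat{Mf}(\eta),
\]
and this should equal $\widehat{P(Mf)}(\alpha,\eta)$. Fourier inversion on $\alpha^\perp$ then gives $\mathcal M(Pf)(\alpha,\cdot)=P(Mf)(\alpha,\cdot)$ for each $\alpha$, and hence the identity.

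The main obstacle is justifying the Fourier--slice theorem for $Mf$, which is not Schwartz. I would proceed by extension, in one of two ways.

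\emph{Approach 1 (mollification).} Approximate $\varphi$ by $\varphi_n\in C_c^\infty$ (or Schwartz multipliers) with $\varphi_n\to\varphi$ a.e. and $|\varphi_n|\le\|\varphi\|_\infty$. Then $M_nf\in\mathcal S$, and the identity holds for each $n$ by Theorem~\ref{t:FST}. Pass to the limit by dominated convergence on the Fourier side, dominated by $\|\varphi\|_\infty|\hat f|$, both in $L^2(\mathbb R^d)$ and in $L^2(\alpha^\perp)$ fiberwise. Continuity of $P$ on the relevant class closes the argument. For instance, use the $L^2$-type bound $\|Pg(\alpha,\cdot)\|_{L^2(\alpha^\perp)}=(2\pi)^{k/2}\|\hat g|_{\alpha^\perp}\|_{L^2}$, valid for Schwartz $g$, to define $P$ by continuity on $\{g:\hat g\in L^1\cap L^2\}$ restricted to slices.

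\emph{Approach 2 (direct Fubini).} When $Mf\in L^1$, the original Fubini computation in the proof of Theorem~\ref{t:FST} works verbatim. In general one can instead verify the identity in the weak (distributional) sense by pairing with test functions $u\in\mathcal S(\mathcal G_{k,d})$ and moving $\varphi$ across using Plancherel \eqref{Plancherel} on each fiber.

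Either route yields the claimed equality $\mathcal M(Pf)=P(Mf)$.
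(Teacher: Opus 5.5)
Your core argument, computing the fiberwise Fourier transform of $\cM(Pf)$ via Theorem~\ref{t:FST} and then using injectivity of the fiberwise Fourier transform, is exactly the paper's one-line proof. The paper never remarks that $Mf\notin\cS(\R^d)$ for general $\varphi$; this is harmless in its later use, where the $\varphi_j$ are smooth and compactly supported so $M_jf\in\cS(\R^d)$, so your extra care is warranted. One caveat on that extra part: for merely bounded Borel $\varphi$ the identity holds only for a.e.\ $\alpha$, not for each $\alpha$ (e.g.\ $\varphi=1-\chi_{\alpha_0^\perp}$ gives $M=I$ but annihilates the fiber over $\alpha_0$), and a fixed-$\alpha$ continuity extension cannot work because $\hat g|_{\alpha^\perp}$ is not controlled by $\|\hat g\|_{L^1\cap L^2(\R^d)}$; run your Approach~1 in $L^2(\cG_{k,d})$ instead, where Lemma~\ref{generalized_polar} makes $\|Pg\|_{L^2(\cG_{k,d})}^2$ a constant multiple of $\int_{\R^d}|\xi|^{-k}|\hat g(\xi)|^2\,d\xi$, so dominated convergence applies directly.
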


\begin{proof}
The identity follows immediately from Theorem~\ref{t:FST}
and injectivity of the fiberwise Fourier transform.
\end{proof}

\medskip

Next we present two auxiliary results that will be used later.

We recall that the quasi-Banach space $\ell^r$, $0 < r \le \infty$, contains all sequences
$\{a_j\}_{j \in \mathbb{N}_0}$ of complex numbers such that the quasi-norm
\[
\| \{a_j\} \|_{\ell^r} :=
\begin{cases}
\left( \displaystyle \sum_{j=0}^{\infty} |a_j|^r \right)^{1/r},
& 0 < r < \infty, \\[1.2ex]
\displaystyle\sup_{j \in \mathbb{N}_0} |a_j|,
&  r = \infty,
\end{cases}
\]
is finite. It is a Banach space if $1 \le r \le \infty$.

\begin{proposition}[Vector-valued inequality for the $k$-plane transform]\label{prop:vector-valued inequality}
Let $1\le r\le \infty$. Then, for any $(\alpha,y)\in\mathcal G_{k,d}$,
\begin{align}\label{eq:vector-valued inequality}
\|\{P(h_j)(\alpha,y)\}\|_{\ell^r} = \Big(\sum_{j=0}^\infty \big|P(h_j)(\alpha,y)\big|^r\Big)^{1/r} \!\!
\le
P\Big(\big(\sum_{j=0}^\infty |h_j|^r\big)^{1/r}\Big)(\alpha,y) = P (\|\{h_j\}\|_{\ell^r})(\alpha,y),
\end{align}
for every sequence $\{h_j\}$ of measurable functions for which the right-hand side of \eqref{eq:vector-valued inequality} is finite
(with sums replaced by a supremum when $r=\infty$).
\end{proposition}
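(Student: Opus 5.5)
The plan is to read the inequality, for fixed $(\alpha,y)\in\mathcal G_{k,d}$, as Minkowski's integral inequality applied to the measure $dx$ on the $k$-plane $\alpha$. The key observation is that $P(h_j)(\alpha,y)=\int_\alpha h_j(x+y)\,dx$ is simply the integral of $x\mapsto h_j(x+y)$ against Lebesgue measure on $\alpha$. So the statement says that the $\ell^r$ norm of a sequence of integrals is at most the integral of the $\ell^r$ norms. That is exactly Minkowski's inequality for the mixed norm $\ell^r(L^1)\le L^1(\ell^r)$, or equivalently the triangle inequality for the Bochner integral of an $\ell^r$-valued function, valid because $r\ge 1$ makes $\ell^r$ a Banach space.

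\textbf{Step 1 (pointwise bound).} For each $j$, $|P(h_j)(\alpha,y)|\le P(|h_j|)(\alpha,y)$. It therefore suffices to treat nonnegative $h_j$. Finiteness of the right-hand side implies $H:=\|\{h_j\}\|_{\ell^r}$ is integrable over $\alpha+y$. Since $|h_j|\le H$, each $P(h_j)(\alpha,y)$ is then well defined as an absolutely convergent integral.

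\textbf{Step 2, case $r=\infty$.} Here $|P(h_j)(\alpha,y)|\le \int_\alpha \sup_i|h_i(x+y)|\,dx$ for every $j$. Taking the supremum over $j$ finishes this case.

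\textbf{Step 2, case $1\le r<\infty$.} I would use duality of $\ell^r$ with $\ell^{r'}$. For a finitely supported sequence $\{b_j\}$ with $\|\{b_j\}\|_{\ell^{r'}}\le1$, I write
\[
\Bigl|\sum_{j\le N} b_j\,P(h_j)(\alpha,y)\Bigr|\le \int_\alpha \sum_{j\le N}|b_j|\,|h_j(x+y)|\,dx\le \int_\alpha H(x+y)\,dx .
\]
The first inequality comes from linearity of the integral over finitely many terms; the second is H\"older's inequality in $j$ applied pointwise in $x$. Taking the supremum over such $\{b_j\}$ bounds the truncated sum $\bigl(\sum_{j\le N}|P(h_j)(\alpha,y)|^r\bigr)^{1/r}$, which has the $\ell^r$-dual characterization. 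Letting $N\to\infty$ by monotone convergence gives the claim. For $r=1$ the argument is even simpler: it reduces to Tonelli's theorem, $\sum_j\int_\alpha|h_j|=\int_\alpha\sum_j|h_j|$.

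\textbf{Measurability.} The only step that needs any care is measurability, and it is a minor point. The function $H$ is measurable as a countable sum or supremum of measurable functions. Its restriction to the plane $\alpha+y$ is measurable with respect to $k$-dimensional Lebesgue measure for the $(\alpha,y)$ under consideration, and this is implicit in the hypothesis that the right-hand side is finite. Otherwise the argument is a routine application of Minkowski's inequality, and I expect no genuine obstacle.
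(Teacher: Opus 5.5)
Your proof is correct and follows the paper's route: the pointwise bound $|P(h_j)|\le P(|h_j|)$, Minkowski's inequality $\ell^r(L^1)\le L^1(\ell^r)$ for $1\le r<\infty$, and a direct supremum argument for $r=\infty$. The only difference is that you prove the Minkowski step inline, by finite-dimensional $\ell^r$--$\ell^{r'}$ duality plus monotone convergence in $N$, whereas the paper cites Minkowski's integral inequality with counting measure on $\mathbb{N}_0$; you also note that finiteness of the right-hand side makes each $P(h_j)(\alpha,y)$ absolutely convergent, which the paper leaves implicit.
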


\begin{proof}
Let $(\alpha,y)\in\mathcal G_{k,d}$. By the triangle inequality for integrals,
\begin{align}\label{eq:tri_ineq_int}
    |P(h_j)(\alpha,y)| = \left|\int_\alpha h_j(x+y)dx \right| \leq \int_\alpha |h_j(x+y)|dx = P(|h_j|)(\alpha,y).
\end{align}

Assume first $1\le r<\infty$. Taking the $\ell^r$-norm in $j$ in \eqref{eq:tri_ineq_int} and applying Minkowski’s integral inequality (viewing the sum as integration with respect to counting measure on $\N_0$; see e.g., \cite{Folland}, Theorem 6.19), we obtain
\begin{align*}
&\|\{P(h_j)\}(\alpha,y)\|_{\ell^r}
\le \Big(\sum_{j=0}^\infty (P(|h_j|)(\alpha,y))^r\Big)^{1/r}
= \Big(\sum_{j=0}^\infty \Big(\int_\alpha |h_j(x+y)|\,dx\Big)^r\Big)^{1/r} \\
&\le \int_\alpha \Big(\sum_{j=0}^\infty |h_j(x+y)|^r\Big)^{1/r} dx 
 = P \Big(
\Big(\sum_{j=0}^\infty |h_j|^r\Big)^{1/r}
\Big)(\alpha,y)
= P\!\left(\|\{h_j\}\|_{\ell^r}\right)(\alpha,y).
\end{align*}

For $r=\infty$, taking the supremum over $j$ in \eqref{eq:tri_ineq_int} gives
\begin{align*}
\|\{P(h_j)\}(\alpha,y)\|_{\ell^\infty}
&= \sup_{j\in\N_0}|P(h_j)(\alpha,y)|
\le \sup_{j\in\N_0} \int_\alpha |h_j(x+y)|\,dx\\
& \le \int_\alpha \sup_{j\in\N_0}|h_j(x+y)|\,dx 
= P(\|\{h_j\}\|_{\ell^\infty})(\alpha,y),
\end{align*}
which completes the proof.
\end{proof}

\begin{lemma}[\cite{Markoe2006}, Theorem 3.9] \label{generalized_polar}
If $f$ is an integrable function on $\R^d$, then
\begin{align}
    \int_{G_{k,d}} \int_{\alpha^\perp} f(y) |y|^k dy d\alpha = |G_{k,d-1}|\int_{\R^d} f(x)dx,
\end{align}
where the total measure of the Grassmannian is defined as
\begin{align*}
    |G_{k,d}| 
    &= \frac{|\S^{d-1}||\S^{d-2}| \cdots |\S^{d-k}|}{2|\S^{k-1}||\S^{k-2}| \cdots |\S^1|}, \quad k \ge 2,\\
    |G_{1,d}| &= \frac{|\S^{d-1}|}{2}\\
    |G_{0,d}| &= 1.    
\end{align*}
\end{lemma}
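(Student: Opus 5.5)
The identity is a Fubini-type statement that splits into a radial part and a spherical part. I would first pass to polar coordinates in each fiber and then use rotation invariance on the sphere. Throughout, $d\alpha$ is the $O(d)$-invariant measure on $G_{k,d}$ with total mass $|G_{k,d}|$, and $dy$ is Lebesgue measure on the $(d-k)$-dimensional space $\alpha^\perp$. By monotone convergence and linearity it suffices to treat $f\ge 0$ measurable; the integrable case then follows by splitting $f$ into positive and negative parts of real and imaginary parts.

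\textbf{Step 1: polar coordinates in the fiber.} For fixed $\alpha$, polar coordinates in $\alpha^\perp$ give
\[
\int_{\alpha^\perp} f(y)|y|^k\,dy=\int_0^\infty r^{k}r^{d-k-1}\int_{\S^{d-1}\cap\alpha^\perp} f(r\theta)\,d\sigma_\alpha(\theta)\,dr .
\]
Here $d\sigma_\alpha$ is the surface measure on the unit sphere $\S^{d-1}\cap\alpha^\perp\cong \S^{d-k-1}$. The weight $|y|^k$ is exactly what makes the radial factor $r^{d-1}$, matching polar coordinates in $\R^d$. Integrating over $\alpha$ and using Tonelli, the left-hand side becomes
\[
\int_0^\infty r^{d-1}\,\Lambda\bigl(f(r\,\cdot)\bigr)\,dr,
\qquad\text{where}\qquad
\Lambda(g):=\int_{G_{k,d}}\int_{\S^{d-1}\cap\alpha^\perp} g(\theta)\,d\sigma_\alpha(\theta)\,d\alpha .
\]

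\textbf{Step 2: identify $\Lambda$.} The functional $\Lambda$ is a positive linear functional on $C(\S^{d-1})$. It is $O(d)$-invariant, because for $U\in O(d)$ we have $(U\alpha)^\perp=U\alpha^\perp$, $U$ carries $d\sigma_\alpha$ to $d\sigma_{U\alpha}$, and $d\alpha$ is $O(d)$-invariant. By the Riesz representation theorem and uniqueness of the rotation-invariant Radon measure on $\S^{d-1}$ (since $O(d)$ acts transitively), $\Lambda(g)=c\int_{\S^{d-1}}g\,d\sigma$ for some constant $c$. This extends to nonnegative Borel $g$ by monotone class arguments. Testing with $g\equiv1$ gives $c\,|\S^{d-1}|=|G_{k,d}|\,|\S^{d-k-1}|$. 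Then polar coordinates in $\R^d$ yield
\[
\int_{G_{k,d}}\int_{\alpha^\perp} f(y)|y|^k\,dy\,d\alpha
= c\int_0^\infty r^{d-1}\int_{\S^{d-1}}f(r\theta)\,d\sigma(\theta)\,dr
= c\int_{\R^d}f(x)\,dx .
\]

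\textbf{Step 3: the constant.} It remains to check $c=|G_{k,d}|\,|\S^{d-k-1}|/|\S^{d-1}|=|G_{k,d-1}|$. For $k\ge2$ the given product formulas share the same denominator, so
\[
\frac{|G_{k,d}|}{|G_{k,d-1}|}=\frac{|\S^{d-1}|\cdots|\S^{d-k}|}{|\S^{d-2}|\cdots|\S^{d-k-1}|}=\frac{|\S^{d-1}|}{|\S^{d-k-1}|}.
\]
For $k=1$ the same ratio is $\frac{|\S^{d-1}|/2}{|\S^{d-2}|/2}$, which agrees with the formula above. When $k=d-1$, $G_{d-1,d-1}$ is a single point with mass $1$, and I would verify directly that the formula conventions give this value.

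The main obstacle I expect is this normalization bookkeeping rather than the analysis. One must confirm that the measure $d\alpha$ implicit in the stated values of $|G_{k,d}|$ is the invariant measure used in Step 2. The measurability of $(\alpha,\theta)\mapsto g(\theta)$ on the sphere bundle, needed for Tonelli, is handled by working locally on $G_{k,d}$ with a smooth orthonormal frame $\gamma(\alpha)$ of $\alpha^\perp$, as in the definition of $\cS(\cG_{k,d})$.
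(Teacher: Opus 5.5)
Your proof is correct. The paper gives no proof of this lemma; it imports it directly from Markoe, so there is no internal argument to match.

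\textbf{What your route shows.} You use polar coordinates in each fiber $\alpha^\perp$, where the weight $|y|^k$ turns $r^{d-k-1}\,dr$ into $r^{d-1}\,dr$. You then identify $\Lambda$ as a rotation-invariant positive functional on $C(\S^{d-1})$, hence a multiple of surface measure. This proves slightly more than the lemma states. The identity holds with constant $c=|G_{k,d}|\,|\S^{d-k-1}|/|\S^{d-1}|$ for any normalization of the invariant measure $d\alpha$. The specific value $|G_{k,d-1}|$ is therefore equivalent to the recursion $|G_{k,d}|\,|\S^{d-k-1}|=|G_{k,d-1}|\,|\S^{d-1}|$, which you verify correctly from the product formulas.

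\textbf{The case $k=d-1$.} Your deferred check is already covered by the $k\ge 2$ ratio computation. With $|\S^0|=2$, the product formula gives $|G_{d-1,d-1}|=|\S^0|/2=1$, and similarly $|G_{1,1}|=1$ when $d=2$. No separate argument is needed.

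\textbf{A simpler measure-theoretic order.} You can avoid extending $\Lambda$ to Borel $g$. Prove the identity first for $f\in C_c(\R^d)$, where $f(r\,\cdot)$ is continuous on the sphere and Riesz applies directly. Then note that $E\mapsto\int_{G_{k,d}}\int_{\alpha^\perp}1_E(y)|y|^k\,dy\,d\alpha$ is a locally finite Borel measure agreeing with $c\,dx$ on $C_c$. Monotone convergence then gives all nonnegative Borel $f$.

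\textbf{The standard alternative.} The usual argument in the integral-geometry literature is a double fibration of the incidence manifold $\{(\alpha,\theta)\in G_{k,d}\times\S^{d-1}:\theta\perp\alpha\}$. Fibering over $G_{k,d}$ gives spheres $\S^{d-1}\cap\alpha^\perp\cong\S^{d-k-1}$. Fibering over $\S^{d-1}$ gives $\{\alpha\subset\theta^\perp\}\cong G_{k,d-1}$. The constant $|G_{k,d-1}|$ then appears directly as a fiber volume. Taking $g\equiv 1$ yields the recursion above, from which the product formula for $|G_{k,d}|$ follows by induction rather than being an input. That route explains where $|G_{k,d-1}|$ comes from. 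However, it requires the two disintegrations of the invariant measure on the incidence manifold to be compatible, which is usually justified by the same uniqueness-of-invariant-measure argument you use. Since the lemma supplies the closed-form values of $|G_{k,d}|$, your argument is the more economical one.
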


\section{Lebesgue Space Estimates}\label{sec:Lp-estimates}
In this section, we review known $L^p$ mapping properties of the
$k$--plane transform that guarantee the integrability of functions
over $k$--dimensional affine planes. These results will play a
central role in our later analysis in Besov and
Triebel--Lizorkin spaces in section \ref{sec:BL-TL-estimates}.

For $0 < p \le \infty$, the space $L^p(\R^d)$ consists of all measurable functions $f$ on $\R^d$ such that the quasi-norm
\[
\| f \|_{L^p} :=
\begin{cases}
\left( \displaystyle\int_{\mathbb{R}^d} |f(x)|^p \, dx \right)^{1/p},
& 0 < p < \infty, \\
\operatorname*{ess\,sup}_{x \in \mathbb{R}^d} |f(x)|,
& p = \infty,
\end{cases}
\]
is finite.

Since the $k$--plane transform depends on both the orientation of the plane and its translation, it is natural to consider mixed Lebesgue norms on the affine Grassmannian. 
For $1\le q,t\le\infty$, we define the mixed-norm space $$L^q_\alpha(L^t_y):=L^q(G_{k,d};L^t(\alpha^\perp)),$$ 
as the set of measurable functions $u$ on $\cG_{k,d}$ such that
\begin{align}\label{mixed-norm}
\|u\|_{L^q_\alpha(L^t_y)}
:=
\left(\int_{G_{k,d}}
\left(\int_{\alpha^\perp}|u(\alpha,y)|^t\,dy\right)^{q/t}
\,d\alpha\right)^{1/q},
\end{align}
is finite.

If $t=q$, we simply write $L^q(\cG_{k,d})$ and the norm becomes
\begin{align}\label{single-norm}
\|u\|_{L^q(\cG_{k,d})}
:=
\left(\int_{G_{k,d}}
\int_{\alpha^\perp}|u(\alpha,y)|^q\,dy
\,d\alpha\right)^{1/q}.
\end{align}

\medskip

For integers $d\ge2$ and $1\le k\le d-1$, we are interested in determining the exponents $1\le p,q,t\le\infty$ for which the estimate
\begin{align}\label{eq:mixed-estimate}
\|Pf\|_{L^q_\alpha(L^t_y)}\le C\,\|f\|_{L^p(\R^d)}
\end{align}
holds with a constant $C>0$ depending only on $d,k,p,q,t$.

We remark that \eqref{eq:mixed-estimate} can hold only if
\begin{align}\label{eq:p-cond}
1 \leq p < \frac{d}{k}.
\end{align}
Indeed, for $p\ge d/k$ one may take
$$f(x) = (1+|x|)^{-k/d}(\log(3+|x|))^{-\delta},$$
with $k/d<\delta<1$, so that $f\in L^p(\R^d)$ while $Pf(\alpha,y)$ diverges; cf.\ \cite{Christ1984}.

Moreover, by considering $f= \chi_{B}$, the characteristic function of a ball or a box $B$, one can show that the following two conditions are also necessary \cite{Drury1989,Sato2022}:
\begin{align}\label{eq:qt-cond}
\frac{d}{p} - \frac{d-k}{t} = k, \qquad q \le (d-k)p', \qquad p'=\frac{p}{p-1}.
\end{align}
We also note that the estimate 
\begin{align}\label{L1-L1-estimate}
\|Pf\|_{L_{\alpha}^q(L_y^1)}\le C_{d,k}\,\|f\|_{L^1(\R^d)}, \quad 1 \leq q \leq \infty,
\end{align}
follows directly from Fubini's theorem. This implies that every integrable function $f$ on $\R^d$ is integrable over almost every affine $k$--plane in $\R^d$ \cite{SmithSolmon1975}.

The first nontrivial Lebesgue estimates for the $k$--plane transform were obtained by Smith and Solmon \cite{SmithSolmon1975}. 
They showed that when $k<d/2$, the operator $P$ is bounded from $L^2(\R^d)$ to $L_\alpha^2(L_y^t)$ provided
\[
\frac{d}{2} - \frac{d-k}{t} = k .
\]

Solmon \cite{Solmon1979} later extended this result to the range
\[
1 \le p \le 2,
\qquad 
p<\frac{d}{k},
\qquad 
\frac{d}{p} - \frac{d-k}{t} = k,
\qquad q=2.
\]

Strichartz \cite{Strichartz1981} subsequently proved an analogue of Solmon's result with $q=p'=\frac{p}{p-1}$ (excluding $p=1$; see also \cite{Calderon1983}), and also established the estimate \eqref{eq:mixed-estimate} for
\[
1 < p <\frac{d}{k},
\qquad 
\frac{d}{p} - \frac{d-k}{t} = k,
\qquad q=1.
\]

\medskip

In the case $k=d-1$, corresponding to the classical Radon transform, Oberlin and Stein \cite{OberlinStein1982} proved that the conditions \eqref{eq:p-cond} and \eqref{eq:qt-cond} are both necessary and sufficient for \eqref{eq:mixed-estimate} to hold.

For $k=1$, corresponding to the x-ray transform, Drury \cite{Drury1983} obtained the estimate \eqref{eq:mixed-estimate} for
\[
1 \le p < \frac{d+1}{2},
\qquad 
t = q = \frac{(d-1)p}{d-p}.
\]
He later extended this analysis to higher dimensions, showing that when $k \ge (d-1)/2$ the estimate holds for
\[
1 \le p \le \frac{d+1}{k+1},
\qquad 
t = q = \frac{(d-k)p}{d-kp}
\]
\cite{Drury1984} (see also \cite{Drury1989,Duoandikoetxea2002,Sato2022}). 
When $d=2$, the Oberlin--Stein theorem gives the complete result for $k=1$.

\medskip

For $1 \le k \le d-2$, the most general result to date was obtained by Christ \cite{Christ1984} (see also \cite{Drury1989}), which is as follows.

\begin{theorem}[\cite{Christ1984}]\label{thm:christ-mixed}
Let $d\ge 2$ and $1\le k \le d-2$. 
Assume either
\begin{align}\label{christ-ThmA}
1 \le p \le \frac{d+1}{k+1},
\end{align}
or 
\begin{align}\label{christ-ThmB}
1 \le p\le 2\qquad\text{and}\qquad p<\frac{d}{k}.
\end{align}
Then for any $q,t \ge 1$ satisfying \eqref{eq:qt-cond}, there exists a $C>0$ (depending on $k,d,p,q,t$) such that
\begin{align}\label{eq:christ-mixed-estimate}
\|Pf\|_{L^q_\alpha(L^t_y)}\le C\,\|f\|_{L^p(\R^d)}.
\end{align}
\end{theorem}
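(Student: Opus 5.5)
This statement is Christ's theorem, so the plan is to reconstruct the argument along the lines of \cite{Christ1984} and \cite{Drury1989}: first handle the endpoints, then interpolate. The estimate is linear and positivity-preserving, since $|Pf|\le P|f|$ by \eqref{eq:tri_ineq_int}, so it suffices to treat nonnegative $f$. By the scaling relation in \eqref{eq:qt-cond}, $t$ is determined by $p$. Moreover, since $G_{k,d}$ has finite measure, Hölder's inequality in $\alpha$ reduces matters to the largest admissible $q$, namely $q=(d-k)p'$ (for $p>1$). The case $p=1$ is exactly \eqref{L1-L1-estimate}. It therefore remains to prove the strong estimate at the upper endpoint of each range and interpolate with $p=1$. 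Because $q$ and $t$ both vary with $p$, this step uses mixed-norm (Benedek--Panzone) complex or real interpolation.

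For range \eqref{christ-ThmB}, the key endpoint is $p=2$, with $k<d/2$ and $q=2(d-k)$. I would work on the Fourier side via Theorem~\ref{t:FST}: $\widehat{Pf}(\alpha,\cdot)=(2\pi)^{k/2}\hat f|_{\alpha^\perp}$. For $q=2$, Solmon's estimate follows from Plancherel in $y$ together with Lemma~\ref{generalized_polar}:
\[
\int_{G_{k,d}}\!\int_{\alpha^\perp}|\hat f(\eta)|^2|\eta|^{k}\,d\eta\,d\alpha=|G_{k,d-1}|\,\||\xi|^{k/2}\hat f\|_2^2 .
\]
Combined with Sobolev embedding on the $(d-k)$-dimensional fibres ($\dot H^{k/2}\subset L^t$), this gives the $q=2$ case. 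Raising $q$ to $2(d-k)$ requires more than orthogonality. I would use Christ's $TT^*$/multilinear argument: write $\|Pf\|_{L^q_\alpha L^t_y}^{q}$ as an integral over $(d-k)$-tuples of planes and exploit the transversality of generic tuples of $\alpha$'s. Lemma~\ref{generalized_polar} serves as the change of variables that turns Grassmannian integrals into integrals over $\R^d$.

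For range \eqref{christ-ThmA}, the endpoint is $p=\frac{d+1}{k+1}$ with $q=t=\frac{(d-k)p}{d-kp}$. Here I would follow Drury's approach: estimate the $L^q$ norm of $Pf$, $q$ an integer or handled via restricted weak type, by expanding $(Pf)^q$ as an integral over $q$-tuples of points lying on common $k$-planes. Then change variables to points in $\R^d$ with a Jacobian given by simplex volumes, and bound the resulting multilinear form by a Brascamp--Lieb/Hardy--Littlewood--Sobolev type inequality. Proving restricted weak type on characteristic functions of sets, as Christ does, followed by real interpolation, upgrades the bound to strong type except at the endpoint. The endpoint itself needs a separate Lorentz-space refinement, and I would cite \cite{Christ1984} for it.

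The main obstacle is clearly this endpoint multilinear estimate, i.e.\ the geometric combinatorics controlling the Jacobian of the map from tuples of planes and points to $\R^d$. The interpolation and reduction steps are routine. In the paper I would present this as a cited result, with the reductions above explaining how the full range of $q,t$ satisfying \eqref{eq:qt-cond} follows from the endpoints.
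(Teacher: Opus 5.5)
The paper does not prove this statement; it quotes it from \cite{Christ1984}, so ending with a citation is consistent with the paper. The problem is your claim that your reductions explain how the whole range of $(p,q,t)$ follows from endpoint estimates plus interpolation with $p=1$. That claim fails for \eqref{christ-ThmB}.

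Since $k$ is an integer, $k<d/2$ means $k\le (d-1)/2$, i.e.\ $\frac{d+1}{k+1}\ge 2$. So in exactly the case you treat (endpoint $p=2$, $q=2(d-k)$), condition \eqref{christ-ThmB} is already contained in \eqref{christ-ThmA}. Your Fourier/multilinear discussion there therefore adds nothing beyond the range-A endpoint. Condition \eqref{christ-ThmB} says something new only when $k\ge d/2$. There it reads $1\le p<d/k\le 2$ and extends past $\frac{d+1}{k+1}$.

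In that regime the upper end is open. At $p=d/k$ the estimate is false, by the example behind \eqref{eq:p-cond}. The exponents $p\in\bigl(\frac{d+1}{k+1},\frac{d}{k}\bigr)$ also lie outside the interpolation segment between $p=1$ and the range-A endpoint. So there is no endpoint to interpolate from, and ``endpoint plus interpolation'' is circular there. One needs an argument that works directly for every $p<\min\{2,d/k\}$; this is the separate content of Christ's result under \eqref{christ-ThmB}. Your sketch neither supplies this argument nor identifies it as something to be cited.

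Two smaller points. First, your displayed identity is misstated. Lemma~\ref{generalized_polar} applied to $|\hat f|^2$ gives $\int_{G_{k,d}}\int_{\alpha^\perp}|\hat f(\eta)|^2|\eta|^k\,d\eta\,d\alpha=|G_{k,d-1}|\,\|f\|_2^2$, not $|G_{k,d-1}|\,\||\xi|^{k/2}\hat f\|_2^2$. It is the correct form that says $P:L^2(\R^d)\to L^2_\alpha(\dot H^{k/2}_y)$, and together with Sobolev embedding in the fibres it gives the $q=2$ bound. As written, your identity gives no bound for $f\in L^2$.

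Second, the interpolation step should use the complex (Benedek--Panzone) method between strong-type endpoints. The real method does not in general return the mixed Lebesgue space $L^q_\alpha(L^t_y)$ when both exponents move. The Lions--Peetre description produces fibrewise Lorentz norms $L^{t,q}_y$, and here $t<q$ whenever $p<\frac{d+1}{k+1}$ and $q=(d-k)p'$. So a restricted weak-type endpoint followed by real interpolation does not by itself yield the stated mixed-norm bounds. You really need Christ's strong-type endpoint.
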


The relative strength of the conditions \eqref{christ-ThmA} and \eqref{christ-ThmB} depends on the dimension $k$. 
If $k<\tfrac{d-1}{2}$, then $2 < \tfrac{d+1}{k+1} < \tfrac{d}{k}$ and condition \eqref{christ-ThmA} provides the larger admissible range of $p$. 
If $k \ge \tfrac{d-1}{2}$, then $\tfrac{d+1}{k+1}\le2$, and the admissible range of $p$ is determined by \eqref{christ-ThmB}. More specifically, in this case $\tfrac{d}{k}>2$ if and only if $d=2k+1$, which holds only when $d$ is odd, and thus the theorem holds for $1 \leq p \leq 2$. Otherwise, $ \tfrac{d+1}{k+1} < \tfrac{d}{k} \leq 2$, so the estimate holds for $1 \leq p \leq \tfrac{d}{k}$. Christ conjectured that the necessary conditions \eqref{eq:p-cond}--\eqref{eq:qt-cond} might also be sufficient in the general case $1\le k\le(d-1)/2$, but this problem remains open except for radial functions \cite{Duoandikoetxea2001}.

\medskip

Mapping properties of the $k$--plane transform in weighted Lebesgue spaces have also been studied. 
In \cite{Rubin2014}, Rubin established weighted estimates for the $k$--plane transform as well as for its dual and the more general $j$--plane to $k$--plane transform acting on $L^p$ spaces with radial power weights. 
These results extend earlier weighted inequalities for the Radon transform obtained by Quinto \cite{Quinto1982} and Kumar and Ray \cite{Kumar2010}.

More precisely, let $1 \le p \le \infty$, and let $p'$ be the conjugate exponent to $p$. 
If 
\[
\nu = \mu - \frac{k}{p'}, 
\qquad 
\mu > k - \frac{d}{p},
\]
then the $k$--plane transform satisfies
\begin{align}\label{Rubin-estimate}
\|\, |\cdot|^\nu P f \,\|_{L^p(\cG_{d,k})}
\le C \,
\|\, |\cdot|^\mu f \,\|_{L^p(\R^d)},
\end{align}
(see \cite[Theorem 1.1]{Rubin2014} for an explicit expression of $C$.)

Related weighted $L^p$-norm inequalities appear in earlier and more recent work.
Solmon \cite{Solmon1979} proved that if $1 \le p < d/k$ and $0<\gamma<k$, then
\[
\bigl\|\, \langle \cdot \rangle^{\gamma-d}\, Pf \,\bigr\|_{L^1(\cG_{k,d})}
\le C\,
\bigl\|\, \langle \cdot \rangle^{k}\, f \,\bigr\|_{L^1(\R^d)},
\qquad 
\langle x \rangle = (1+|x|^2)^{1/2}.
\]
More recently, Flock \cite{Flock2025} obtained that if
$1 < p \le \frac{d+1}{k+1}$ and $\frac{d}{p}-\frac{d-k}{q}=k$, then
\[
\bigl\|\, \langle \cdot \rangle^{d}\, Pf \,\bigr\|_{L^q(\cG_{k,d})}
\le C\,
\bigl\|\, \langle \cdot \rangle^{\frac{d-k}{p-1}}\, f \,\bigr\|_{L^p(\R^d)}.
\]

Among the weighted results, Rubin's estimate \eqref{Rubin-estimate}
shows that the $k$--plane transform maps weighted $L^p$ spaces into
weighted $L^p$ spaces on the affine Grassmannian, with the relation
$\nu=\mu-\frac{k}{p'}$ precisely quantifying how integration over
$k$--dimensional planes reduces decay.
In contrast, the inequality of Flock allows a change of exponent
$L^p\to L^q$ consistent with the scaling relation
$\frac{d}{p}-\frac{d-k}{q}=k$, but requires stronger polynomial
weights on both $f$ and $Pf$.

\section{Sobolev Space Estimates}\label{sec:Sobolev-estimates}
While the Lebesgue estimates of the previous section describe the
integrability and decay of the $k$-plane integrals $Pf(\alpha,y)$,
Sobolev estimates presented in this section quantify the smoothing effect of the
$k$-plane transform.

\subsection{Sobolev Spaces on $\R^d$ and $\cG_{k,d}$}
Let $s\in\R$. The Sobolev norm of order $s$ of a function $f:\R^d\to\C$ is defined by
\begin{align}\label{sobolev-norm}
\|f\|_{H^s(\R^d)}
=
\left(
\int_{\R^d} (1+|\xi|^2)^s |\hat f(\xi)|^2 d\xi
\right)^{1/2},
\end{align}
where $\hat f$ denotes the Fourier transform of $f$.
The Sobolev space $H^s(\R^d)$ is defined as the completion of
$\mathcal S(\R^d)$ with respect to this norm.

As observed by Strichartz~\cite{Strichartz1986}, the Grassmannian bundle
$\cG_{k,d}$ does not admit a Riemannian or semi-Riemannian metric that is
invariant under the full Euclidean motion group. Consequently, there is no canonical invariant Laplace--Beltrami operator on $\cG_{k,d}$ and therefore no natural isotropic Sobolev scale. Instead, one defines Sobolev spaces using derivatives only in the fiber variables $y\in\alpha^\perp$.

For functions $u:\cG_{k,d}\to\C$ we define the Sobolev norm of order $s$ by
\begin{align}\label{eq:sobolev-norm-G}
\|u\|_{H^s(\cG_{k,d})}^2
=
\int_{G_{k,d}}\int_{\alpha^\perp}
(1+|\eta|^2)^s
|\widehat{u}(\alpha,\eta)|^2
\,d\eta\,d\alpha,
\end{align}
where $\widehat{u}(\alpha,\eta)$ denotes the Fourier transform of
$u(\alpha,y)$ in the fiber variable $y$.

The space $H^s(\cG_{k,d})$ is defined as the completion of the space of test functions with respect to this norm.

Thus, $H^s(\cG_{k,d})$ measures Sobolev regularity of order $s$ in the
fiber variables $y\in\alpha^\perp$, while the variable $\alpha$ is treated only in $L^2(G_{k,d})$.

\subsection{Sobolev Estimates for the $k$–Plane Transform}
Sobolev space estimates for the X-ray and Radon transforms have been
established in the literature (see \cite[Theorem 5.1]{Natterer}). The
next theorem gives the corresponding estimate for the $k$-plane transform.
In particular, it shows that the transform increases Sobolev regularity
by $k/2$ derivatives, consistent with its microlocal description as
a Fourier integral operator of order $-k/2$ \cite{Chihara2022}.

\begin{theorem}\label{thm:sobolev-estimate}
Let $s\in\R$ and $f\in H^s(\R^d)$ with $\operatorname{supp}(f)\subseteq\overline{\Omega}$ 
for some bounded open set $\Omega\subset\R^d$.
Then there exist constants $c_{s,d,k},C_{s,d,k}>0$ such that
\begin{align}\label{StabilitySobolev}
c_{s,d,k}\|f\|_{H^s(\R^d)}
\le
\|Pf\|_{H^{s+k/2}(\cG_{k,d})}
\le
C_{s,d,k}\|f\|_{H^s(\R^d)}.
\end{align}
\end{theorem}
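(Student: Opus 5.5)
Following Natterer's argument, the plan is to use the Fourier--slice theorem (Theorem~\ref{t:FST}) and the generalized polar coordinate formula (Lemma~\ref{generalized_polar}) to rewrite the Sobolev norm of $Pf$ on $\cG_{k,d}$ as a weighted Sobolev norm of $f$ on $\R^d$. By Theorem~\ref{t:FST},
\[
\|Pf\|_{H^{s+k/2}(\cG_{k,d})}^2=(2\pi)^k\int_{G_{k,d}}\int_{\alpha^\perp}(1+|\eta|^2)^{s+k/2}|\hat f(\eta)|^2\,d\eta\,d\alpha .
\]
Applying Lemma~\ref{generalized_polar} to $F(\xi)=(1+|\xi|^2)^{s+k/2}|\xi|^{-k}|\hat f(\xi)|^2$ gives
\[
\|Pf\|_{H^{s+k/2}(\cG_{k,d})}^2=(2\pi)^k|G_{k,d-1}|\int_{\R^d}(1+|\xi|^2)^{s}\Bigl(\frac{1+|\xi|^2}{|\xi|^2}\Bigr)^{k/2}|\hat f(\xi)|^2\,d\xi .
\]
We first do this for $f\in\mathcal S$ and pass to general $f$ by density. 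The weight $\bigl((1+|\xi|^2)/|\xi|^2\bigr)^{k/2}$ is at least $1$, so the lower bound follows immediately with $c^2=(2\pi)^k|G_{k,d-1}|$, and no support assumption is needed for it.

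For the upper bound, the weight is bounded by $2^{k/2}$ on $|\xi|\ge1$. So it remains to control
\[
\int_{|\xi|<1}(1+|\xi|^2)^s|\xi|^{-k}|\hat f(\xi)|^2\,d\xi ,
\]
and this is where compact support enters. Since $k\le d-1<d$, the factor $|\xi|^{-k}$ is integrable on the unit ball. It therefore suffices to show
\[
\sup_{|\xi|\le1}|\hat f(\xi)|\le C_{\Omega,s}\|f\|_{H^s(\R^d)} .
\]

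To prove this, choose $\psi\in C_c^\infty(\R^d)$ with $\psi\equiv1$ on a neighborhood of $\overline\Omega$. Then $f=\psi f$, so
\[
\hat f(\xi)=(2\pi)^{-d/2}\langle f,\psi e_{-\xi}\rangle, \qquad e_{-\xi}(x)=e^{-ix\cdot\xi}.
\]
By $H^s$--$H^{-s}$ duality,
\[
|\hat f(\xi)|\le (2\pi)^{-d/2}\|f\|_{H^s}\,\|\psi e_{-\xi}\|_{H^{-s}} .
\]
Moreover $\|\psi e_{-\xi}\|_{H^{-s}}^2=\int(1+|\zeta|^2)^{-s}|\hat\psi(\zeta+\xi)|^2\,d\zeta$, which is bounded uniformly for $|\xi|\le1$ because $\hat\psi$ is Schwartz (using Peetre's inequality when $s<0$). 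Combining this with the previous paragraph yields the upper bound, with a constant that also depends on $\Omega$.

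The main obstacle is this low-frequency step. The $|\xi|^{-k}$ singularity is not controlled by the $H^s$ norm for general $f$; indeed the upper estimate fails without compact support when $k$ is large relative to the decay of $\hat f$ at $0$. The constant $C$ therefore necessarily depends on $\Omega$, even though the theorem writes it as $C_{s,d,k}$. A minor additional point is justifying Lemma~\ref{generalized_polar} for the nonnegative, possibly non-integrable integrand, which follows by monotone convergence.
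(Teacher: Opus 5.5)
Your proposal is correct and follows the paper's proof essentially step for step. You use the Fourier--slice theorem and Lemma~\ref{generalized_polar} to reduce to the weight $|\xi|^{-k}(1+|\xi|^2)^{s+k/2}$, get the lower bound from that weight, split at $|\xi|=1$, and bound $\sup_{|\xi|\le1}|\hat f(\xi)|$ by $H^s$--$H^{-s}$ duality against a cutoff times $e^{-ix\cdot\xi}$. Your remark that the upper constant must depend on $\Omega$ is accurate: the paper's own constant $C_{s,d,\chi}$ depends on the cutoff $\chi$, and hence on $\Omega$.
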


\begin{proof}
By the above definitions and the Fourier-slice theorem \ref{t:FST}, we have
\begin{align*}
\|Pf\|_{H^{s+k/2}(\cG_{k,d})}^2 
&= \int_{G_{k,d}} \int_{\alpha^\perp} (1+|\eta|^2)^{s+k/2}|\widehat{Pf}(\alpha,\eta)|^2 \, d\eta \, d\alpha \\
&= (2\pi)^{k}\int_{G_{k,d}} \int_{\alpha^\perp} (1+|\eta|^2)^{s+k/2}|\hat{f}(\eta)|^2 \, d\eta \, d\alpha \\
&= (2\pi)^{k}|G_{k,d-1}|\int_{\R^d} |\xi|^{-k} (1+|\xi|^2)^{s+k/2}|\hat{f}(\xi)|^2 \, d\xi,
\end{align*}
where the last equality is implied by Lemma \ref{generalized_polar}. 
Since $(1+|\xi|^2)^{k/2} \geq |\xi|^k$ for all $\xi \in \R^d$, we obtain the left-hand side inequality with 
$c_{s,d,k} = \sqrt{(2\pi)^{k}|G_{k,d-1}|}$.

For the right-hand side inequality, we split the last integral into two parts:
\begin{align*}
\int_{\R^d} |\xi|^{-k} (1+|\xi|^2)^{s+k/2}|\hat{f}(\xi)|^2 \, d\xi 
&= \int_{|\xi| \geq 1} |\xi|^{-k} (1+|\xi|^2)^{s+k/2}|\hat{f}(\xi)|^2 \, d\xi \\
&\quad + \int_{|\xi| \leq 1} |\xi|^{-k} (1+|\xi|^2)^{s+k/2}|\hat{f}(\xi)|^2 \, d\xi.
\end{align*}

For $|\xi| \geq 1$, we have $1+|\xi|^2 \le 2|\xi|^2$, and thus
\[
\int_{|\xi| \geq 1} |\xi|^{-k} (1+|\xi|^2)^{s+k/2}|\hat{f}(\xi)|^2 \, d\xi
\leq 2^{k/2} \int_{|\xi| \geq 1} (1+|\xi|^2)^s|\hat{f}(\xi)|^2 \, d\xi
\leq 2^{k/2}\|f\|^2_{H^s}.
\]

For $|\xi| \leq 1$, we estimate
\[
\int_{|\xi| \leq 1} |\xi|^{-k} (1+|\xi|^2)^{s+k/2}|\hat{f}(\xi)|^2 \, d\xi
\leq 
\left(\int_{|\xi| \leq 1} |\xi|^{-k} (1+|\xi|^2)^{s+k/2} \, d\xi \right)\sup_{|\xi| \leq 1}|\hat{f}(\xi)|^2.
\]

Writing $\xi = r\omega$ with $r=|\xi|$ and $\omega\in\mathbb S^{d-1}$ gives
\begin{align*}
\int_{|\xi|\le 1} &|\xi|^{-k}(1+|\xi|^2)^{s+k/2}\,d\xi
=|\mathbb S^{d-1}|
\int_0^1 r^{d-1-k}(1+r^2)^{s+k/2}\,dr\\
&\le \max\{1, 2^{s+k/2}\}|\mathbb S^{d-1}| 
\int_0^1 r^{d-1-k}\,dr
= \max\{1, 2^{s+k/2}\}| \frac{|\mathbb S^{d-1}|}{d-k},
\end{align*}
which is finite as $k<d$.

It remains to estimate $\sup_{|\xi|\le 1}|\hat{f}(\xi)|^2$, and this is where the compact support of $f$ is needed.
Let $\chi \in C_c^{\infty}(\R^d)$ satisfy $\chi = 1$ on $\Omega$, and define $\chi_{\xi}(x) = e^{-ix\cdot \xi} \chi(x)$.
Since $\operatorname{supp}(f)\subseteq \Omega$, we have
\[
\hat{f}(\xi) 
= (2\pi)^{-d/2}\int_{\R^d} f(x)e^{-ix\cdot\xi}\,dx
= (2\pi)^{-d/2}\int_{\R^d} f(x)\chi_\xi(x)\,dx.
\]
Set $g_\xi(x) := (2\pi)^{-d/2}\chi_\xi(x)$. Then $\hat f(\xi)=\langle f,g_\xi\rangle$.

By Sobolev duality $(H^s(\R^d))^* = H^{-s}(\R^d)$ (see, e.g., \cite[Ch.~3]{Adams}), we have the estimate
\[
|\hat f(\xi)| = |\langle f,g_\xi\rangle|
\le \|f\|_{H^s}\,\|g_\xi\|_{H^{-s}}
= (2\pi)^{-d/2}\|f\|_{H^s}\,\|\chi_\xi\|_{H^{-s}}.
\]
Therefore it suffices to show that $\sup_{|\xi|\le 1}\|\chi_\xi\|_{H^{-s}}<\infty$.

Using $\widehat{\chi_\xi}(\zeta)=\hat\chi(\zeta+\xi)$, we compute
\[
\|\chi_\xi\|_{H^{-s}}^2
= \int_{\R^d} (1+|\zeta|^2)^{-s}\,|\widehat{\chi_\xi}(\zeta)|^2\,d\zeta
= \int_{\R^d} (1+|\zeta|^2)^{-s}\,|\hat\chi(\zeta+\xi)|^2\,d\zeta.
\]
Changing variables $z=\zeta+\xi$ gives
\[
\|\chi_\xi\|_{H^{-s}}^2
= \int_{\R^d} (1+|z-\xi|^2)^{-s}\,|\hat\chi(z)|^2\,dz.
\]

Assume $|\xi|\le 1$. From $|z-\xi|\le |z|+1$ and $|z|\le |z-\xi|+1$ we obtain
\[
\frac{1}{3}(1+|z|^2) \le 1+|z-\xi|^2 \le 3(1+|z|^2).
\]

Thus, for all $s\in\R$,
\[
(1+|z-\xi|^2)^{-s} \le 3^{|s|}(1+|z|^2)^{-s},
\qquad |\xi|\le 1.
\]

Hence
\[
\|\chi_\xi\|_{H^{-s}}^2
\le 3^{|s|}\int_{\R^d}(1+|z|^2)^{-s}\,|\hat\chi(z)|^2\,dz.
\]
Since $\chi\in C_c^\infty(\R^d)$, its Fourier transform $\hat\chi$ belongs to the Schwartz space, and therefore for all $s\in\R$,
\[
\int_{\R^d}(1+|z|^2)^{-s}\,|\hat\chi(z)|^2\,dz <\infty.
\]
Thus, there exists a constant $C_{s,d,\chi}>0$ such that
\[
\sup_{|\xi|\le 1}\|\chi_\xi\|_{H^{-s}}^2 \le C_{s,d,\chi}.
\]
Combining the above inequalities yields
\[
\sup_{|\xi|\le 1}|\hat f(\xi)|^2
\le (2\pi)^{-d}\,C_{s,d,\chi}\,\|f\|_{H^s}^2.
\]
This gives the desired bound for the low-frequency region $|\xi|\le 1$, and hence the right-hand side inequality in \eqref{StabilitySobolev}.
We note that for $s<0$ the duality pairing $\langle f,g_\xi\rangle$ is understood in the sense of distributions.
\end{proof}

\subsection{Weighted Sobolev Estimates}\label{sec:weighted-sobolev estimates}

Let $1\le p<\infty$, $s\in\R$, and $t>-d/p$. We define the weighted
Sobolev space on $\R^d$ by
\begin{align}\label{weighted-sobolev}
\|f\|_{H^{s,p}_t(\R^d)}^p
=
\int_{\R^d}
\left(
|\xi|^{t}(1+|\xi|^2)^{\frac{s-t}{2}}
|\hat f(\xi)|
\right)^p
\,d\xi .
\end{align}

Similarly, for functions on the affine Grassmannian $\cG_{k,d}$ we define
\begin{align}\label{weighted-sobolev-G}
\|u\|_{H^{s,p}_t(\cG_{k,d})}^p
=
\frac{(2\pi)^{-pk/2}}{|G_{k,d-1}|}
\int_{G_{k,d}}
\int_{\alpha^\perp}
\left(
|\eta|^{t}(1+|\eta|^2)^{\frac{s-t}{2}}
|\widehat u(\alpha,\eta)|
\right)^p
\,d\eta\,d\alpha .
\end{align}

When $p=2$, these spaces reduce to classical Sobolev spaces: if $t=0$
we recover the usual Sobolev space ($H^{s,2}_0 = H^{s}$),
while $t=s$ corresponds to the homogeneous Sobolev space
($H^{s,2}_s = \dot H^{s}$). Moreover, for compactly supported
functions the norm \eqref{weighted-sobolev} is equivalent to the
classical Sobolev norm \eqref{sobolev-norm} for any $s\in\R$ and
$|t|<d/2$, see \cite[Lemma~2.3]{Sharafutdinov2017tensor}.

The Fourier weight
\[
w_t(\xi):=|\xi|^{t}(1+|\xi|^2)^{-t/2}
\]
behaves like $|\xi|^{t}$ as $|\xi|\to0$ and tends to a constant as
$|\xi|\to\infty$. Consequently, the weight alters only the contribution of low frequencies to the Sobolev norm: for $t>0$ low frequencies are suppressed, whereas for $t<0$ they are amplified.

Identities relating norms of this type to integral transforms have
appeared previously in several settings. For the hyperplane Radon
transform ($k=d-1$), Reshetnyak \cite{GGV1966} established such an
identity in the $L^2$ setting ($p=2$, $s=0$, and $t=0$). Sharafutdinov extended this result to arbitrary $s\in\R$ and later to weighted norms with $t>-d/2$
\cite{Sharafutdinov2017tensor,Sharafutdinov2021radon}. He also treated the case
of the X-ray transform ($k=1$) under the same conditions \cite{Sharafutdinov2021x-ray}.
More recently, Kindermann and Hubmer \cite{Kindermann2025} proved the
corresponding identity for all $1\le p<\infty$ in the hyperplane Radon
transform case, allowing arbitrary $s\in\R$ and weights $t>-d/p$.

The following theorem establishes the corresponding identity for the
general $k$--plane transform.

\begin{theorem}\label{thm:P-isometry}
Let $d \ge 2$ and $1 \le k \le d-1$. Suppose that $1\le p<\infty$, $s\in\R$, and $t>-d/p$. Then
\begin{align}\label{P-isometry}
\|Pf\|_{H^{s+k/p,p}_{t+k/p}(\cG_{k,d})}
=
\|f\|_{H^{s,p}_t(\R^d)} .
\end{align}
\end{theorem}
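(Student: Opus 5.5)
The plan is to follow the proof of Theorem~\ref{thm:sobolev-estimate}: first move everything to the Fourier side with the Fourier--slice theorem (Theorem~\ref{t:FST}), then collapse the Grassmannian integral with the generalized polar coordinate formula (Lemma~\ref{generalized_polar}). I would argue first for $f\in\mathcal S(\R^d)$, where all manipulations are justified, and then pass to the completion by density.

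Inserting $\widehat{Pf}(\alpha,\eta)=(2\pi)^{k/2}\hat f(\eta)$ into \eqref{weighted-sobolev-G}, with $s,t$ replaced by $s+k/p$ and $t+k/p$, gives
\begin{align*}
\|Pf\|^p_{H^{s+k/p,p}_{t+k/p}(\cG_{k,d})}
&=\frac{(2\pi)^{-pk/2}}{|G_{k,d-1}|}\,(2\pi)^{pk/2}\int_{G_{k,d}}\int_{\alpha^\perp}
|\eta|^{pt+k}(1+|\eta|^2)^{\frac{p(s-t)}{2}}|\hat f(\eta)|^p\,d\eta\,d\alpha.
\end{align*}
The normalizing constant in \eqref{weighted-sobolev-G} cancels the factor $(2\pi)^{pk/2}$ exactly. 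The difference $s-t$ is unchanged by the shift, so the inhomogeneous factor is the same as in \eqref{weighted-sobolev}.

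The key observation is that the extra $k/p$ in the weight index produces exactly the factor $|\eta|^{k}$ required by Lemma~\ref{generalized_polar}. I would apply that lemma to
\[
F(\xi)=|\xi|^{pt}(1+|\xi|^2)^{\frac{p(s-t)}{2}}|\hat f(\xi)|^p .
\]
The inner integrand is then $F(\eta)|\eta|^k$, so the double integral equals $|G_{k,d-1}|\int_{\R^d}F(\xi)\,d\xi$. This is $|G_{k,d-1}|\,\|f\|^p_{H^{s,p}_t(\R^d)}$, and the remaining constant cancels as well.

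The main obstacle is checking that $F$ satisfies the hypothesis of Lemma~\ref{generalized_polar}, namely $F\in L^1(\R^d)$. For Schwartz $f$ the decay of $\hat f$ handles large $|\xi|$. Near the origin, $F\sim|\xi|^{pt}$, which is locally integrable in $\R^d$ exactly when $pt>-d$, that is, $t>-d/p$. This is precisely the assumption of the theorem.

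The lemma applies to nonnegative measurable functions in any case, by monotone convergence, with both sides possibly infinite. The identity therefore holds in $[0,\infty]$ for every $f$ whose right-hand side norm is defined. It then extends to the completion, since $P$ is an isometry on the dense subspace $\mathcal S(\R^d)$.
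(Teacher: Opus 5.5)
Your proposal is correct and matches the paper's proof step for step. You insert the Fourier--slice theorem into the definition of the weighted norm on $\cG_{k,d}$, note that the normalizing constant cancels $(2\pi)^{pk/2}$ and that the shifted weight index supplies exactly the factor $|\eta|^k$, and then collapse the Grassmannian integral with Lemma~\ref{generalized_polar}; your remarks on integrability near the origin (where $t>-d/p$ enters) and on extension by density are details the paper leaves implicit.
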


\begin{proof}
By definition \eqref{weighted-sobolev-G} and the Fourier slice theorem
(Theorem~\ref{t:FST}) we obtain
\begin{align*}
\|Pf\|_{H^{s+k/p,p}_{t+k/p}(\cG_{k,d})}^p
&=
\frac{(2\pi)^{-pk/2}}{|G_{k,d-1}|}
\int_{G_{k,d}}
\int_{\alpha^\perp}
\left(
|\eta|^{t+k/p}(1+|\eta|^2)^{\frac{s-t}{2}}
|\widehat{Pf}(\alpha,\eta)|
\right)^p
d\eta\,d\alpha
\\
&=
\frac{1}{|G_{k,d-1}|}
\int_{G_{k,d}}
\int_{\alpha^\perp}
\left(
|\eta|^{t+k/p}(1+|\eta|^2)^{\frac{s-t}{2}}
|\hat f(\eta)|
\right)^p
d\eta\,d\alpha .
\end{align*}

Applying lemma \ref{generalized_polar} on $\R^d$ yields
\begin{align*}
\|Pf\|_{H^{s+k/p,p}_{t+k/p}(\cG_{k,d})}^p
&=
\int_{\R^d}
\left(
|\xi|^{t}(1+|\xi|^2)^{\frac{s-t}{2}}
|\hat f(\xi)|
\right)^p
d\xi
=
\|f\|_{H^{s,p}_t(\R^d)}^p ,
\end{align*}
which proves \eqref{P-isometry}.
\end{proof}

\section{Besov and Triebel--Lizorkin Space Estimates}\label{sec:BL-TL-estimates}
In this section, we study the mapping properties of the $k$-plane transform in Besov and Triebel--Lizorkin spaces. These spaces form a refined scale of smoothness spaces extending the classical Sobolev scale. While Sobolev spaces measure smoothness in a global averaged sense, Besov and Triebel–Lizorkin spaces provide a finer description of regularity by decomposing functions into components localized in different frequency bands via Littlewood–Paley decompositions. The include many classical function spaces as special cases, including Sobolev and Bessel--potential spaces, Hölder--Zygmund spaces, local Hardy spaces, and functions of bounded mean oscillation. For a thorough study of these spaces we refer to  \cite{Triebel1983,Triebel1992,Grafakos2014classical,Grafakos2014modern,Hytonen2023}.

\subsection{Besov and Triebel--Lizorkin Spaces on $\R^d$ and $\cG_{k,d}$}
We begin by introducing a smooth dyadic resolution of unity
$\{\varphi_j\}_{j\in\mathbb N_0}$.

Let $\varphi \in C^\infty(\mathbb{R}^d)$ satisfy
\[
\operatorname{supp}\varphi \subset \{\xi \in \R^d: |\xi| \le 2\},
\qquad
\varphi(\xi)=1 \quad \text{if } |\xi|\le 1.
\]

For $j \in \N$, define
\[
\varphi_j(\xi):=\varphi(2^{-j}\xi)-\varphi(2^{-j+1}\xi).
\]

Then
\[
\operatorname{supp}\varphi_j
\subset
\{2^{j-1}\le |\xi|\le 2^{j+1}\},
\quad j\in\N,
\]
and, with $\varphi_0=\varphi$,
\[
\sum_{j=0}^{\infty}\varphi_j(\xi)=1,
\qquad \xi\in\R^d.
\]

For $f\in\mathcal S'(\R^d)$ we define the associated
Littlewood--Paley projections by
\begin{align}\label{LPproj_f}
M_j f :=(\varphi_j \widehat{f})^\vee,
\qquad j\in\N_0 .
\end{align}

Each operator $M_j$ localizes a function to frequencies of size
$\sim 2^j$, and we have the decomposition
\[
f=\sum_{j=0}^\infty M_jf,
\]
with convergence in $\cS'$.

We now introduce the (inhomogeneous) Besov and Triebel--Lizorkin spaces.
\begin{definition}
Let $s\in\R$ and $0<r\le\infty$.
\begin{enumerate}
\item[(i)]
For $0<p\le\infty$, the Besov space $B^s_{p,r}(\R^d)$ consists of all
$f\in\mathcal S'(\R^d)$ such that
\begin{align}\label{BL-norm}
\|f\|_{B^s_{p,r}}
:=
\Big(
\sum_{j=0}^\infty
\big(2^{js}\|M_j f\|_{L^p}\big)^r
\Big)^{1/r}
<\infty .
\end{align}

\item[(ii)]
For $0<p<\infty$, the Triebel--Lizorkin space $F^s_{p,r}(\R^d)$ consists
of all $f\in\mathcal S'(\R^d)$ such that
\begin{align}\label{TL-norm}
\|f\|_{F^s_{p,r}}
:=
\left\|
\left(
\sum_{j=0}^\infty (2^{js}|M_j f|)^r
\right)^{1/r}
\right\|_{L^p}
<\infty .
\end{align}
\end{enumerate}
If $r=\infty$, the sums are replaced by the supremum.
\end{definition}

Different choices of the dyadic resolution of unity $\{\varphi_j\}$
lead to equivalent norms, so the spaces $B^s_{p,r}$ and $F^s_{p,r}$
are independent of $\{\varphi_j\}$ (see \cite{Triebel1983}, Section 2.3.2).

\medskip

To define Besov and Triebel--Lizorkin spaces on $\cG_{k,d}$ we apply the
Littlewood--Paley decomposition in the fiber frequency variable
$\eta\in\alpha^\perp$.

Let $\{\varphi_j\}_{j\in\N_0}$ be the dyadic partition introduced above.
For $u\in\cS'(\cG_{k,d})$ we define the associated Littlewood--Paley
projections by
\begin{align}\label{LPproj_u}
\cM_j u(\alpha,y)
:=
(\varphi_j(\cdot)\widehat u(\alpha,\cdot))^\vee(y),
\qquad j\in\N_0 .
\end{align}
Then
\[
u=\sum_{j=0}^\infty \cM_j u,
\]
with convergence in $\cS'(\cG_{k,d})$.

\begin{definition}\label{BL-TL-spaceG}
Let $s\in\R$, $0<q\le\infty$, and $0<r\le\infty$.

\begin{enumerate}

\item[(i)]
For $0<t\le\infty$, the anisotropic Besov space
$B^s_{(q,t),r}(\cG_{k,d})$ consists of all
$u\in\cS'(\cG_{k,d})$ such that
\begin{align}\label{aniBL-normG}
\|u\|_{B^s_{(q,t),r}(\cG_{k,d})}
:=
\Big(
\sum_{j=0}^\infty
\big(
2^{js}\|\cM_j u\|_{L^q_\alpha(L^t_y)}
\big)^r
\Big)^{1/r}
<\infty .
\end{align}

\item[(ii)]
For $0<t<\infty$, the anisotropic Triebel--Lizorkin space
$F^s_{(q,t),r}(\cG_{k,d})$ consists of all
$u\in\cS'(\cG_{k,d})$ such that
\begin{align}\label{aniTL-normG}
\|u\|_{F^s_{(q,t),r}(\cG_{k,d})}
:=
\left\|
\left(
\sum_{j=0}^\infty (2^{js}|\cM_j u|)^r
\right)^{1/r}
\right\|_{L^q_\alpha(L^t_y)}
<\infty .
\end{align}
\end{enumerate}
If $r=\infty$, the sums are replaced by the supremum.
\end{definition}

These definitions are independent (up to equivalence of norms) of the particular choice of dyadic resolution of unity
$\{\varphi_j\}_{j=0}^\infty$, since the Littlewood--Paley
decomposition acts only in the Euclidean fiber variable
$\eta\in\R^{d-k}$ and the resulting estimates can be applied
fiberwise and integrated over $G_{k,d}$.

For $t=q$, we write
\[
B_{q,r}^s(\cG_{k,d})
:=
B_{(q,q),r}^s(\cG_{k,d}),
\qquad
F_{q,r}^s(\cG_{k,d})
:=
F_{(q,q),r}^s(\cG_{k,d}).
\]

In the case $q=r=2$, the Besov and Triebel--Lizorkin spaces
coincide with the fiber Sobolev space introduced in
Section~\ref{sec:Sobolev-estimates}.

\begin{proposition}
For every $s\in\R$,
\[
F_{2,2}^s(\cG_{k,d})
=
B_{2,2}^s(\cG_{k,d})
=
H^s(\cG_{k,d}),
\]
with equivalent norms.
\end{proposition}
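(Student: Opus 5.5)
The plan is to work fiberwise: for each fixed $\alpha\in G_{k,d}$, Plancherel on the Euclidean space $\alpha^\perp\cong\R^{d-k}$ turns every $L^2$ norm in $y$ into an $L^2$ norm in $\eta$. Integrating over $G_{k,d}$ then reduces both identifications to a pointwise comparison of weights in $\eta$.

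The first identification, $F^s_{2,2}(\cG_{k,d})=B^s_{2,2}(\cG_{k,d})$, is immediate with equal norms when $q=t=r=2$. By Tonelli, the $\ell^2$ sum and the integral over $(\alpha,y)$ can be interchanged, so
\[
\|u\|_{F^s_{2,2}}^2=\int_{G_{k,d}}\int_{\alpha^\perp}\sum_{j}2^{2js}|\cM_ju(\alpha,y)|^2\,dy\,d\alpha=\sum_j 2^{2js}\|\cM_ju\|_{L^2(\cG_{k,d})}^2=\|u\|_{B^s_{2,2}}^2 .
\]

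For the second identification, apply Plancherel in $y$ for each fixed $\alpha$, using $\widehat{\cM_ju}(\alpha,\eta)=\varphi_j(\eta)\hat u(\alpha,\eta)$. This gives
\[
\|u\|_{B^s_{2,2}}^2=\int_{G_{k,d}}\int_{\alpha^\perp}\Big(\sum_{j=0}^\infty 2^{2js}|\varphi_j(\eta)|^2\Big)|\hat u(\alpha,\eta)|^2\,d\eta\,d\alpha .
\]
Comparing with \eqref{eq:sobolev-norm-G}, it suffices to show that there are constants $0<c\le C$, depending only on $s$ and $\varphi$, such that for all $\eta$
\[
c\,(1+|\eta|^2)^s\le \sum_j 2^{2js}|\varphi_j(\eta)|^2\le C\,(1+|\eta|^2)^s .
\]

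This weight comparison is the main step, and I would prove it directly from the support properties. For the upper bound: each $\eta$ lies in the support of at most two (or three) of the $\varphi_j$. On $\operatorname{supp}\varphi_j$ we have $2^{2js}\sim(1+|\eta|^2)^s$, with constants depending on $|s|$, since $2^{j-1}\le|\eta|\le2^{j+1}$ for $j\ge1$ and $|\eta|\le2$ for $j=0$. The $\varphi_j$ are also uniformly bounded. For the lower bound: since $\sum_j\varphi_j=1$ and at most three terms are nonzero at any $\eta$, some $j$ with $\eta\in\operatorname{supp}\varphi_j$ satisfies $|\varphi_j(\eta)|\ge1/3$. For that $j$, $2^{2js}|\varphi_j(\eta)|^2\ge\tfrac19\,2^{2js}\gtrsim(1+|\eta|^2)^s$.

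Finally, I would note the density issue. Both spaces are completions of test functions, or equivalently subspaces of $\cS'(\cG_{k,d})$ defined by finiteness of the respective norms. Since the norms are equivalent on test functions, and the Fourier-side expressions make sense for any tempered $u$ whose fiberwise transform is locally $L^2$, the spaces coincide.
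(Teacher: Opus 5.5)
Your proposal is correct and follows the paper's proof: fiberwise Plancherel together with Fubini/Tonelli reduces both norms to the weight $\sum_j 2^{2js}|\varphi_j(\eta)|^2$, which is comparable to $(1+|\eta|^2)^s$, and a density/completion argument finishes the proof. You add two things the paper does not spell out: you prove that weight comparison from the support and partition-of-unity properties (the paper only asserts it), and you get $F^s_{2,2}=B^s_{2,2}$ with equal norms directly from Tonelli.
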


\begin{proof}
Let $u\in\cS(\cG_{k,d})$. By definition and Fubini's theorem,
\[
\|u\|_{F_{2,2}^s(\cG_{k,d})}^2
=
\int_{G_{k,d}}
\sum_{j=0}^\infty 2^{2js}\int_{\alpha^\perp} |\cM_j u(\alpha,y)|^2
\,dy\,d\alpha .
\]

For each fixed $\alpha$, Plancherel's theorem in the fiber variable gives
\[
\int_{\alpha^\perp} |\cM_j u(\alpha,y)|^2\,dy
=
\int_{\alpha^\perp}
|\varphi_j(\eta)\widehat u(\alpha,\eta)|^2
\,d\eta .
\]

Hence, again by Fubini's theorem,
\[
\|u\|_{F_{2,2}^s(\cG_{k,d})}^2
=
\int_{G_{k,d}}\int_{\alpha^\perp}
\Big(\sum_{j=0}^\infty
2^{2js}|\varphi_j(\eta)|^2\Big)
|\widehat u(\alpha,\eta)|^2
\,d\eta\,d\alpha .
\]

By definition of the dyadic resolution of unity $\{\varphi_j\}$, we have
\[
c_s(1+|\eta|^2)^s
\le
\sum_{j=0}^\infty 2^{2js}|\varphi_j(\eta)|^2
\le
C_s(1+|\eta|^2)^s,
\]
for some $c_s, C_s >0$.
Thus we obtain
\[
c_s \|u\|_{H^s(\cG_{k,d})}^2 \le \|u\|_{F_{2,2}^s(\cG_{k,d})}^2
\le c_s \|u\|_{H^s(\cG_{k,d})}^2.
\]
The same computation, with the sum over $j$ taken outside the $L^2$--norm, shows that
$\|u\|_{B_{2,2}^s(\cG_{k,d})}$ is also equivalent to $\|u\|_{H^s(\cG_{k,d})}$. Density of
$\cS(\cG_{k,d})$ in all three spaces and standard completion arguments then yield the result. 
\end{proof}

\subsection{Besov and Triebel--Lizorkin Space Estimates}
The next result combines the mixed-norm Lebesgue estimates for $P$ (cf. section \ref{sec:Lp-estimates}) with Littlewood--Paley decompositions. The key ingredients are that $P$ intertwines Euclidean Fourier multipliers with their fiberwise counterparts on $\cG_{k,d}$ (Proposition~\ref{prop:intertwine-kplane}) and the vector valued inequality for $P$ (Proposition~\ref{prop:vector-valued inequality}).

\begin{theorem}[Anisotropic Besov and Triebel--Lizorkin estimates]
\label{thm:BL-TL-estimate}
Let $d\ge2$ and $1\le k\le d-1$. Suppose the exponents $p,q,t$
satisfy
\begin{align}\label{eq:aniso-extra}
1\le p\le \frac{d+1}{k+1},
\qquad
\text{\emph{or}}\quad p\le2\ \text{ \emph{and} }\ p<\frac{d}{k},
\end{align}
and
\begin{align}\label{eq:aniso-range}
\frac{d}{p}-\frac{d-k}{t}=k,
\qquad
q\le (d-k)p'.
\end{align}
Then, for any $1\le r \le \infty,\; s \in \R$, there exists a constant $C>0$ (depending on $k,d,p,q,t$) such that
\begin{align}\label{eq:aniBL-bound}
\|Pf\|_{B^s_{(q,t),r}(\cG_{k,d})}\ \le\ C\,\|f\|_{B^s_{p,r}(\R^d)},
\end{align}
and,
\begin{align}\label{eq:aniTL-bound}
\|Pf\|_{F^{s}_{(q,t),r}(\cG_{k,d})}
\ \le\ C\,\|f\|_{F^{s}_{p,r}(\R^d)}.
\end{align}
Thus, the $k$-plane transform $P$ is bounded as a map from $B^{s}_{p,r}(\R^d) \to B^{s}_{(q,t),r}(\cG_{k,d})$ and $F^{s}_{p,r}(\R^d) \to F^{s}_{(q,t),r}(\cG_{k,d})$.
\end{theorem}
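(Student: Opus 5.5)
The plan is to reduce both estimates to the mixed-norm Lebesgue bound of Theorem~\ref{thm:christ-mixed}, using Proposition~\ref{prop:intertwine-kplane} and Proposition~\ref{prop:vector-valued inequality}. The hypotheses \eqref{eq:aniso-extra}--\eqref{eq:aniso-range} are exactly those of Theorem~\ref{thm:christ-mixed}. For $k=d-1$ the Oberlin--Stein theorem gives the same conclusion. Hence in all cases
\[
\|Pg\|_{L^q_\alpha(L^t_y)}\le C\|g\|_{L^p(\R^d)}
\]
with $C=C(k,d,p,q,t)$. I first work with $f\in\mathcal S(\R^d)$ and pass to general $f$ by density or approximation at the end; this needs care when $r=\infty$.

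\emph{Step 1: commuting $P$ with the Littlewood--Paley pieces.} Apply Proposition~\ref{prop:intertwine-kplane} with the bounded multiplier $\varphi_j$. Since the same dyadic partition is used on $\R^d$ and fiberwise on $\cG_{k,d}$, this gives
\[
\cM_j(Pf)=P(M_jf),\qquad j\in\N_0 .
\]
Each $M_jf$ is a Schwartz function, so $P(M_jf)$ is well defined pointwise.

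\emph{Step 2: the Besov bound.} For each $j$, the Lebesgue estimate gives
\[
\|\cM_j Pf\|_{L^q_\alpha(L^t_y)}=\|P(M_jf)\|_{L^q_\alpha(L^t_y)}\le C\|M_jf\|_{L^p}.
\]
Multiply by $2^{js}$ and take the $\ell^r$ norm in $j$. This yields \eqref{eq:aniBL-bound} with the same constant $C$, for every $s\in\R$ and $0<r\le\infty$.

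\emph{Step 3: the Triebel--Lizorkin bound.} Set $h_j=2^{js}M_jf$. Since $P$ is linear, $2^{js}\cM_jPf=P(h_j)$ pointwise on $\cG_{k,d}$. For $1\le r\le\infty$, Proposition~\ref{prop:vector-valued inequality} gives, for every $(\alpha,y)$,
\[
\Big(\sum_j \big(2^{js}|\cM_jPf(\alpha,y)|\big)^r\Big)^{1/r}\le P\Big(\big(\textstyle\sum_j|h_j|^r\big)^{1/r}\Big)(\alpha,y).
\]
The right-hand side is $P$ applied to the nonnegative function $G=\|\{2^{js}M_jf\}\|_{\ell^r}$, which lies in $L^p$ with $\|G\|_{L^p}=\|f\|_{F^s_{p,r}}$. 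Take the $L^q_\alpha(L^t_y)$ norm, using monotonicity of the mixed norm, and then apply the Lebesgue bound to $G$. This gives \eqref{eq:aniTL-bound}.

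\emph{Main obstacle.} The main obstacle is justifying the Lebesgue bound for $P$ on the non-smooth function $G\in L^p$, and making sense of $Pf$ for general $f$.
\begin{itemize}
\item For nonnegative measurable functions, $PG$ is defined as an integral with values in $[0,\infty]$.
\item Approximate $G$ from below by an increasing sequence of nonnegative Schwartz or simple functions.
\item Monotone convergence then transfers the a priori estimate to $G$. In particular, the right-hand side of \eqref{eq:vector-valued inequality} is finite almost everywhere, so Proposition~\ref{prop:vector-valued inequality} applies.
\end{itemize}

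For the extension to all $f$ in $B^s_{p,r}$ or $F^s_{p,r}$:
\begin{itemize}
\item Define $Pf$ in $\cS'(\cG_{k,d})$ through the pieces, namely $Pf:=\sum_j P(M_jf)$.
\item The bound on the pieces makes this series converge in the target quasi-norm when $r<\infty$.
\item When $r=\infty$, apply the estimates to finite partial sums and use Fatou's lemma instead of density.
\end{itemize}
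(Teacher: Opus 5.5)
Your proposal is correct and follows the paper's proof essentially step for step. Both use the intertwining identity $\cM_j(Pf)=P(M_jf)$ and apply the mixed-norm Lebesgue bound piece by piece for the Besov estimate; for the Triebel--Lizorkin estimate, both use the pointwise vector-valued inequality and then apply the Lebesgue bound to $\|\{2^{js}M_jf\}\|_{\ell^r}$. Your extra remarks also patch two points the paper's proof passes over: you invoke Oberlin--Stein for $k=d-1$, which Theorem~\ref{thm:christ-mixed} as stated does not cover, and you avoid relying on density of $\mathcal S$ in $B^s_{p,r}$ and $F^s_{p,r}$, which fails for $r=\infty$.
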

\begin{proof}
Let $d,k,p,q,t,r,s$ be as in the theorem. For the admissible parameters, $\mathcal S(\R^d)$ is dense in $B^s_{p,r}(\R^d)$ and $F^s_{p,r}(\R^d)$ (see \cite[Theorem~2.3.3]{Triebel1983}), so it suffices to prove \eqref{eq:aniBL-bound} and \eqref{eq:aniTL-bound} for $f\in\mathcal S(\R^d)$.

Let $f\in\mathcal S(\R^d)$. By the intertwining property
(Proposition~\ref{prop:intertwine-kplane}), for each $j\in\N_0$,
\begin{align}\label{eq:thm-intertwine}
\cM_j (Pf) \,=\, P (M_j f).
\end{align}

We first prove the Besov estimate \eqref{eq:aniBL-bound}. Using \eqref{eq:thm-intertwine} and the mixed norm estimate of Theorem~\ref{thm:christ-mixed}, we obtain for each $j\in\N_0$,
\[
\|\cM_j(Pf)\|_{L_\alpha^q(L_y^t)}
=
\|P(M_j f)\|_{L_\alpha^q(L_y^t)}
\le
C\,\|M_j f\|_{L^p(\R^d)}.
\]
Multiplying by $2^{js}$ and taking the $\ell^r$-norm over $j$ gives
\begin{align*}
\|Pf\|_{B^{s}_{(q,t),r}(\cG_{k,d})}
&=
\Big(\sum_{j=0}^\infty\big(2^{js}\|\cM_j(Pf)\|_{L_\alpha^q(L_y^t)}\big)^r\Big)^{1/r}\\
&\le
C\Big(\sum_{j=0}^\infty\big(2^{js}\|M_j f\|_{L^p(\R^d)}\big)^r\Big)^{1/r}
=
C\,\|f\|_{B^{s}_{p,r}(\R^d)},
\end{align*}
which proves \eqref{eq:aniBL-bound}.

Next, we prove the Triebel--Lizorkin bound \eqref{eq:aniTL-bound}. By \eqref{aniTL-normG} and \eqref{eq:thm-intertwine},
\begin{align*}
\|Pf\|_{F^s_{(q,t),r}(\cG_{k,d})}
&=\Big\|\Big(\sum_{j=0}^\infty\big(2^{js}|\cM_j(Pf)|\big)^r\Big)^{1/r}\Big\|_{L_\alpha^q(L_y^t)}\\
&=\Big\|\Big(\sum_{j=0}^\infty\big(2^{js}|P(M_jf)|\big)^r\Big)^{1/r}\Big\|_{L_\alpha^q(L_y^t)}.
\end{align*}

Since $P$ is linear, $2^{js} P(M_j f) = P(2^{js} M_j f)$.
Applying Proposition~\ref{prop:vector-valued inequality} with
$h_j=2^{js}M_jf$ yields
\begin{align}\label{use vector-valued inequality}
\Big(\sum_{j=0}^\infty|P(2^{js}M_j f)|^r\Big)^{1/r}
\le
 P\Big(\Big(\sum_{j=0}^\infty|2^{js}M_j f|^r\Big)^{1/r}\Big) \quad \text{pointwise}.
\end{align}
Define
\[
F(x):=\Big(\sum_{j=0}^\infty\big(2^{js}|M_j f(x)|\big)^r\Big)^{1/r}.
\]
Taking the mixed norm $L^q_\alpha(L^t_y)$ in \eqref{use vector-valued inequality} yields
\begin{align}\label{eq:thm-reduce-to-LpLq}
\|Pf\|_{F^s_{(q,t),r}(\cG_{k,d})}\leq  \|P(F)\|_{L_\alpha^q(L_y^t)}.
\end{align}
Under \eqref{eq:aniso-range}--\eqref{eq:aniso-extra}, Theorem~\ref{thm:christ-mixed} implies
\[
\|P(F)\|_{L_\alpha^q(L_y^t)}\le C\,\|F\|_{L^p(\R^d)} = C\|f\|_{F^s_{p,r}(\R^d)},
\]
where the last identity is exactly \eqref{TL-norm}. Combining with \eqref{eq:thm-reduce-to-LpLq} proves \eqref{eq:aniTL-bound}.
\end{proof}

By taking $t=q$ in Theorem~\ref{thm:BL-TL-estimate}, we obtain an isotropic version. Indeed, substituting $t=q$ into \eqref{eq:aniso-range} gives $q=(k+1)p$.
\begin{corollary}[Isotropic Besov and Triebel--Lizorkin estimates]\label{thm:TL-Besov-upper-P}
For $d\ge2$ and $1\le k\le d-1$, let
\[
1 \le p \le \frac{d+1}{k+1},\qquad q=(k+1)p,\qquad 1\le r \le \infty,\qquad s\in\R.
\]
Then, there exists a constant $C >0$ (depending on $k,d,p,q$) such that
\begin{align}\label{eq:thm-Besov}
\|Pf\|_{B^{s}_{q,r}(\cG_{k,d})}\ \le\ C\,\|f\|_{B^{s}_{p,r}(\R^d)}.
\end{align}
and
\begin{align}\label{eq:thm-TL}
\|Pf\|_{F^{s}_{q,r}(\cG_{k,d})}\ \le\ C\,\|f\|_{F^{s}_{p,r}(\R^d)}.
\end{align}
\end{corollary}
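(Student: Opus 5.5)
The plan is to obtain the corollary from Theorem~\ref{thm:BL-TL-estimate} by specializing to $t=q$. That theorem needs \eqref{eq:aniso-extra} and \eqref{eq:aniso-range}. The range $1\le p\le \frac{d+1}{k+1}$ is the first alternative in \eqref{eq:aniso-extra}, so that hypothesis is satisfied. With $t=q$, the scaling relation in \eqref{eq:aniso-range} reads
\[
\frac{d}{p}-\frac{d-k}{q}=k .
\]
I will first check directly when $q=(k+1)p$ satisfies it. Substituting, the relation becomes $(k+1)d-(d-k)=k(k+1)p$, that is, $kd+k=k(k+1)p$, so $p=\frac{d+1}{k+1}$. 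I then need the second condition $q\le (d-k)p'$ at this endpoint. There $p'=\frac{d+1}{d-k}$, so $(d-k)p'=d+1=(k+1)p=q$, and the condition holds with equality.

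Hence at $p=\frac{d+1}{k+1}$, $q=d+1$, both \eqref{eq:aniBL-bound} and \eqref{eq:aniTL-bound} with $t=q$ give exactly \eqref{eq:thm-Besov} and \eqref{eq:thm-TL}. The underlying input is Theorem~\ref{thm:christ-mixed}, together with the intertwining identity $\cM_j P=PM_j$ (Proposition~\ref{prop:intertwine-kplane}) and Proposition~\ref{prop:vector-valued inequality}.

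For $1\le p<\frac{d+1}{k+1}$, the exponent the theorem actually yields at $t=q$ is $q_0=\frac{(d-k)p}{d-kp}$, and a short computation shows $q_0<(k+1)p$. I would therefore try to pass from $L^{q_0}_\alpha(L^{q_0}_y)$ to $L^{(k+1)p}_\alpha(L^{(k+1)p}_y)$ blockwise, in two steps.

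First, in the $y$-variable I would apply Bernstein's inequality on each fiber $\alpha^\perp\cong\R^{d-k}$ to $P(M_jf)$, whose fiber spectrum lies in $|\eta|\sim 2^j$. Second, in the $\alpha$-variable I would use H\"older on the compact Grassmannian $G_{k,d}$ to lower the exponent; raising it via H\"older is impossible, which is why the Bernstein step carries the burden.

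This is where I expect the main obstacle. Bernstein costs a factor $2^{j(d-k)(1/q_0-1/((k+1)p))}$. This shifts the smoothness index, which the statement (same $s$ on both sides) does not allow. Moreover, dilating $f$ shows that a same-$s$ bound with $q=(k+1)p$ contradicts the necessary scaling \eqref{eq:qt-cond} when $p<\frac{d+1}{k+1}$, at least for the high-frequency blocks.

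So my concrete plan is to prove the statement fully at the endpoint $p=\frac{d+1}{k+1}$, $q=d+1$, as above. For the interior range I would record that the argument gives the bounds with $q=\frac{(d-k)p}{d-kp}$ in place of $(k+1)p$. I expect the wording $q=(k+1)p$ to require correcting to this value away from the endpoint.
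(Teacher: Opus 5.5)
Your proposal is correct, and where it departs from the paper, the paper is the one in error. The paper's entire proof is the remark that setting $t=q$ in \eqref{eq:aniso-range} ``gives $q=(k+1)p$''. In fact it gives $q=q_0:=\frac{(d-k)p}{d-kp}$, and $q_0$ coincides with $(k+1)p$ only at $p=\frac{d+1}{k+1}$, exactly as you computed. At that endpoint your argument follows the paper's intended route: specialize Theorem~\ref{thm:BL-TL-estimate} to $t=q=d+1$, where $q\le (d-k)p'$ holds with equality. (For $k=d-1$ the mixed-norm input is Oberlin--Stein rather than Theorem~\ref{thm:christ-mixed}, which is stated only for $k\le d-2$. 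This gap is inherited from Theorem~\ref{thm:BL-TL-estimate}.)

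You can drop the hedge ``at least for the high-frequency blocks''. Your dilation is a genuine counterexample, so the corollary as stated fails for every $1\le p<\frac{d+1}{k+1}$. Here is the argument in full.

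\begin{itemize}
\item Take $0\ne f\in\mathcal S(\R^d)$ with $\operatorname{supp}\hat f\subset\{1\le|\xi|\le\frac32\}$, and set $f_j(x)=f(2^jx)$ for $j\ge2$.
\item By Theorem~\ref{t:FST}, both $\hat f_j$ and the fiber transform of $Pf_j$ are supported in $\{2^j\le|\xi|\le\frac32 2^j\}$. So only the indices $i\in\{j-1,j,j+1\}$ contribute.
\item The operators $M_i$ and $\cM_i$ are bounded on $L^p$ and $L^q$ uniformly in $i$, since their kernels are $L^1$-normalized dilates of a fixed function.
\item Hence, uniformly in $j$, both $\|f_j\|_{B^s_{p,r}}$ and $\|f_j\|_{F^s_{p,r}}$ are $\lesssim 2^{js}\|f_j\|_{L^p}=2^{j(s-d/p)}\|f\|_{L^p}$.
\item Since $Pf_j(\alpha,y)=2^{-jk}Pf(\alpha,2^jy)$ and $Pf_j=\sum_{i=j-1}^{j+1}\cM_iPf_j$, both $\|Pf_j\|_{B^s_{q,r}}$ and $\|Pf_j\|_{F^s_{q,r}}$ are $\gtrsim 2^{js}\|Pf_j\|_{L^q(\cG_{k,d})}=2^{j(s-k-(d-k)/q)}\|Pf\|_{L^q(\cG_{k,d})}$.
\item Letting $j\to\infty$ forces $k+\frac{d-k}{q}\ge\frac dp$, i.e.\ $q\le q_0$. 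For $q=(k+1)p$ this means $p\ge\frac{d+1}{k+1}$.
\end{itemize}

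So the correct isotropic statement is the one you propose: $t=q=q_0$ for $1\le p\le\frac{d+1}{k+1}$, which is the exponent in Drury's estimates quoted in Section~\ref{sec:Lp-estimates}. The Bernstein loss you computed, $(d-k)\bigl(\frac1{q_0}-\frac1{q}\bigr)=\frac dp-k-\frac{d-k}{q}$, is exactly this scaling defect, so it is sharp.

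One small correction to your reasoning: the $\alpha$-exponent is not the obstruction. Theorem~\ref{thm:BL-TL-estimate} already allows $L^{(k+1)p}_\alpha(L^{q_0}_y)$, because $(k+1)p\le (d-k)p'$ if and only if $p\le\frac{d+1}{k+1}$. Only the fiber exponent must stay at $q_0$, and that is what the dilation detects.
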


\section{Conclusions}\label{sec:conclusion}

In this paper we studied mapping properties of the $k$-plane transform in several classical function spaces. 
First, we established Sobolev stability estimates for compactly supported functions, extending Natterer's estimates for the X-ray and Radon transforms to the general $k$-plane transform. This result shows that the $k$-plane transform improves Sobolev regularity by $k/2$ derivatives, in agreement with its microlocal description as a Fourier integral operator of order $-k/2$.
We also proved an isometry identity for weighted Sobolev spaces, extending earlier results obtained for the Radon and X-ray transforms by Reshetnyak, Sharafutdinov, and Kindermann--Hubmer to the general $k$-plane transform.

We then established boundedness of the $k$-plane transform in Besov and Triebel--Lizorkin spaces. In contrast to the Sobolev estimates, these results do not require compact support, but the smoothness parameter remains unchanged. Here, the analysis relies on the known $L^p$ estimates for the $k$-plane transform together with its intertwining relation with Fourier multiplier operators and a vector-valued inequality. These properties allow Lebesgue space estimates to be transferred to more refined smoothness scales.

Several directions remain for future investigation. It would be natural to establish corresponding lower bounds and stability estimates for the $k$-plane transform in Besov and Triebel--Lizorkin spaces. Another natural question is whether a shift in the smoothness scale occurs in these function space settings, as in the Sobolev case. Finally, it would be of interest to explore whether the estimates obtained here can be used in the analysis of inversion and reconstruction algorithms for the $k$-plane transform.

\section*{Acknowledgements}
This work was supported in part by NSF DMS grant 2206279. An AI-based language tool was used to assist in editing the manuscript for spelling, grammar, and stylistic improvements.

\bibliographystyle{siam}
\bibliography{references}

\end{document}